\newtheorem{theorem}{Theorem}[section]
\newtheorem{lemma}[theorem]{Lemma}
\newtheorem{proposition}[theorem]{Proposition}
\newtheorem{Remark}[theorem]{Remark}
\newenvironment{remark}{\begin{Remark}\rm}{\end{Remark}}
\numberwithin{equation}{section}
\newcommand{\eps}{\varepsilon}
\newcommand{\R}{\mathbb{R}}
\newcommand{\C}{\mathbb{C}}
\newcommand{\N}{\mathbb{N}}
\newcommand{\Z}{\mathbb{Z}}
\newcommand{\OO}{\mathcal{O}}
\renewcommand{\Im}{\mathop{\mathrm{Im}}}
\title{Limits of determinantal  processes near a tacnode}
\author{Alexei Borodin \footnotemark[1] \and Maurice Duits\footnotemark[2]}
\date{}
\begin{document}

\maketitle

\renewcommand{\thefootnote}{\fnsymbol{footnote}}
\footnotetext[1]{Department of Mathematics, California Institute of Technology, 1200 E. California Blvd, Pasadena, USA and IITP RAS, Moscow, Russia. email:
borodin\symbol{'100}caltech.edu.
This work was partially supported by NSF grant DMS-07070163. } \footnotetext[2]{Department of Mathematics, California Institute of Technology, 1200 E. California Blvd, Pasadena, USA. email:
mduits\symbol{'100}caltech.edu}

\begin{abstract}
We study a  Markov process on a system of interlacing particles. At large times the particles  fill a domain that depends on a parameter $\eps> 0$. The domain has two cusps, one pointing up and one pointing down. In the limit $\varepsilon\downarrow 0$  the cusps touch, thus forming a tacnode. The main result of the paper is a derivation of the local correlation kernel around the tacnode in the transition regime $\varepsilon \downarrow 0$. We also prove that the local process interpolates between the Pearcey process and   the GUE minor process.

\textbf{Keywords}: Determinantal point processes. Random growth. GUE minor process. Pearcey process.

\end{abstract}

\section{Introduction}
In a recent paper \cite{BorFer}  the authors introduced  a Markov process on a system of interlacing particles. This model contains many parameters, creating a rich pool of interesting particular examples, and at the same time it has an integrable structure that allows for explicit computations. It is the purpose of this paper, to study a special case of this model. The interest of this model lies in the fact that  for large time the particles will fill a domain that  has a tacnode on the boundary. A somewhat similar situation also occurs  in the case of non-intersecting Brownian paths with multiple sources and sinks \cite{AFvM}.  The local process at the tacnode in this model is not understood, although a conjecture  is given in \cite{AFvM}.  The integrability of the model we consider allows us  to compute the local process around the tacnode. This is the main result of this paper.

 We consider an evolution on particles that are placed on the grid
\begin{align}
\mathcal G =\left\{(x,m) \mid m=1,2,\ldots \quad x\in \Z+\frac{m+1}{2}\right\}.
\end{align}
 Hence, if $(x,m) \in \mathcal G$, then $x$ takes integer values for odd values of $m$ and half-integer values for even values of $m$. At each horizontal $m$-section  we  put $m$ particles and denote their horizontal coordinates by $x_k^m$ for $k=1,\ldots, m$.   The evolution is such that at each time the system of particles satisfies the interlacing condition
\begin{align}
x_{k-1}^{m}<x_{k-1}^{m-1}<x_k^{m} ,\qquad k=2,\ldots, m, \qquad m=2,\ldots
\end{align}
At time $t=0$ we put the particles at positions $x_k^m=-(m+1)/2+k$ as shown in Figure \ref{fig:particles}.

\begin{figure}[t]\begin{center}
\subfigure[]{\includegraphics[scale=0.35]{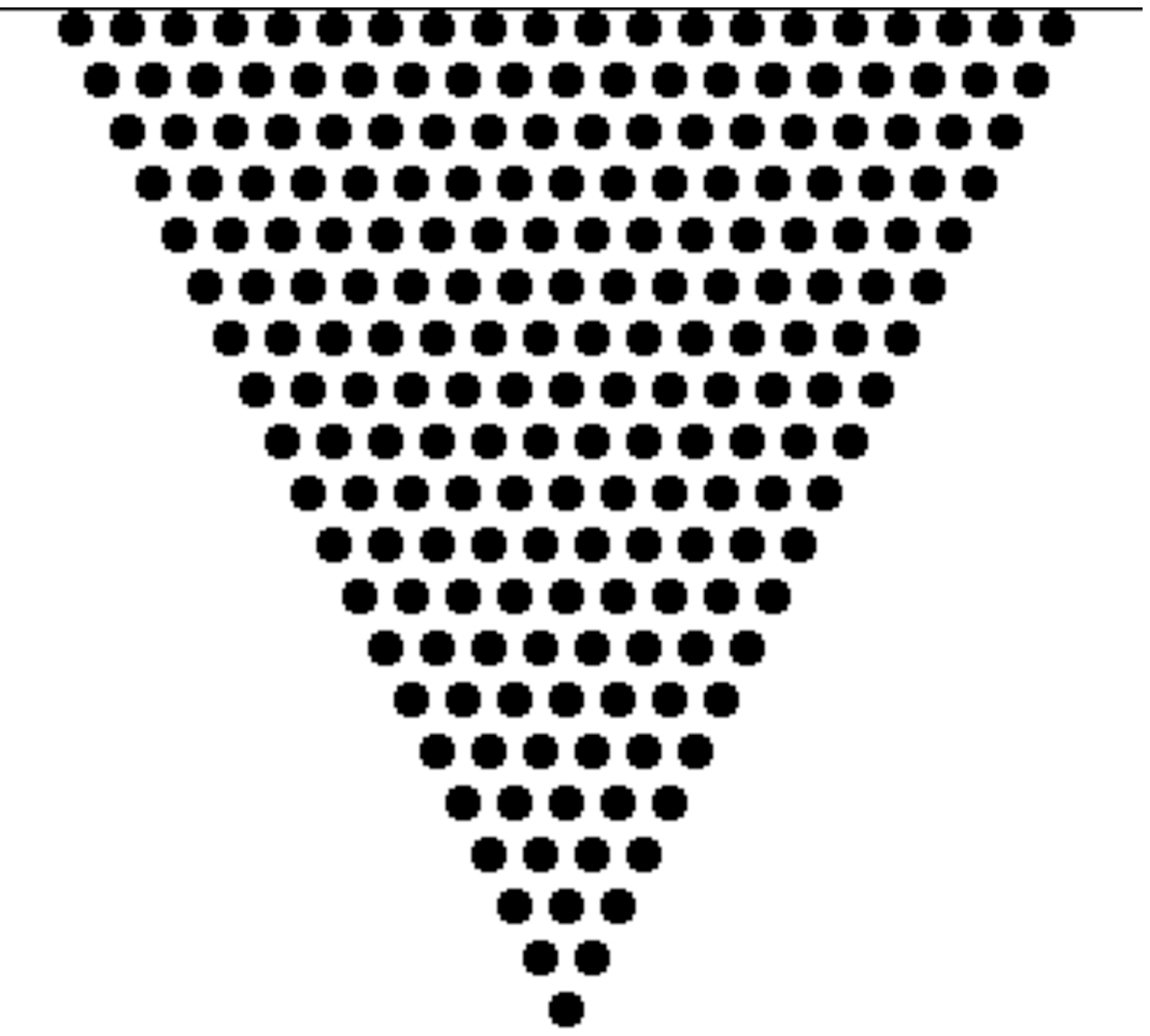}} \label{fig:init} \hspace{1cm}
\subfigure[]{ \includegraphics[scale=0.35]{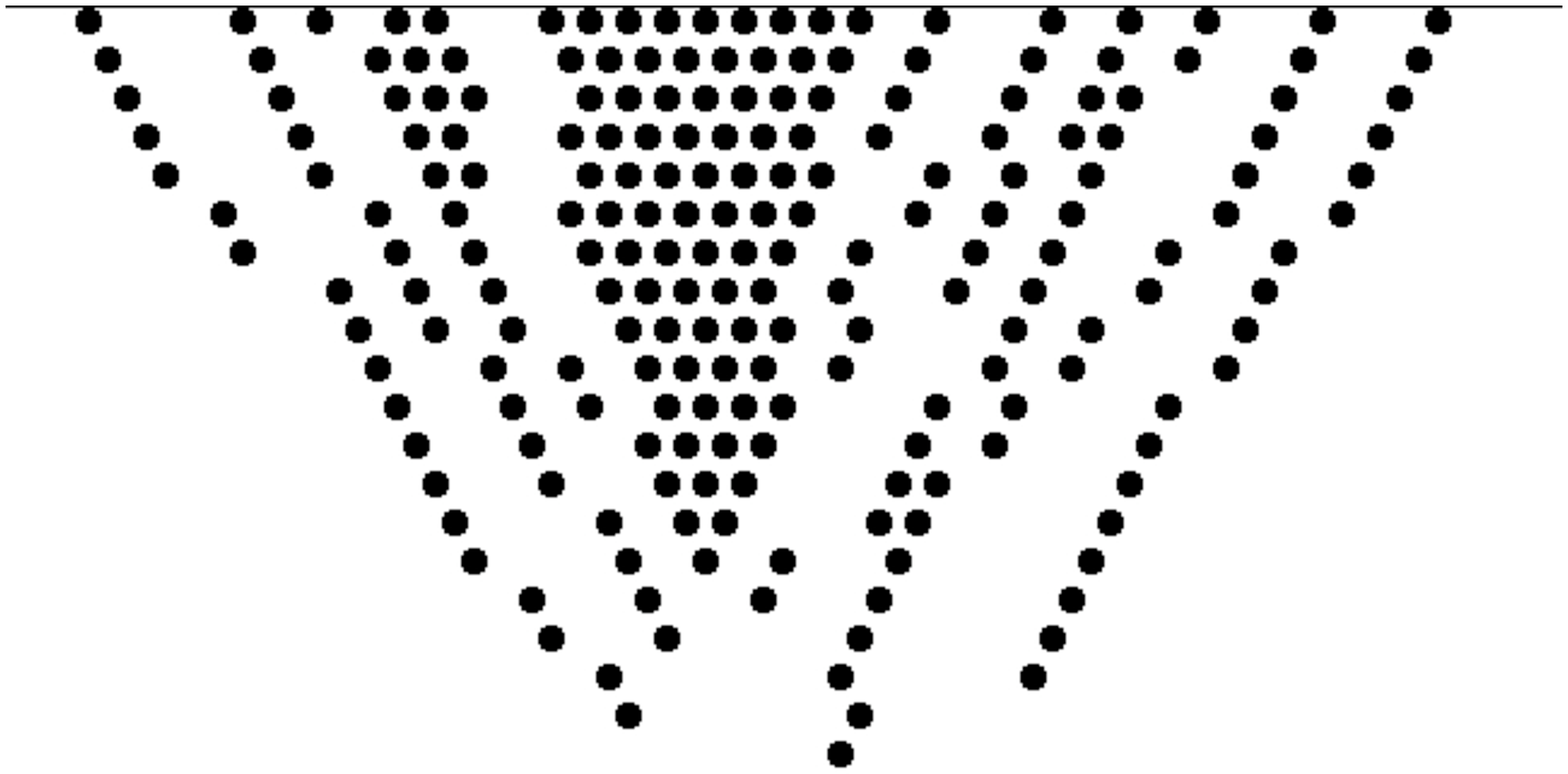}} \label{fig:pointconfig}
\end{center}
\caption{(a) The initial condition and (b) an example of a point configuration after some time.} \label{fig:particles}
\end{figure}

The evolution of the particles is as follows: each particle has two independent exponential clocks, a left and a right clock respectively. If the right (left) clock rings, the particle attempts  to jump to the right (left) by one. But in doing so it is forced to respect the interlacing condition according to the following two rules:  if the right (respectively left) clock of the particle at $x_{k}^m$ rings, then
\begin{enumerate}
\item if $x_k^m=x_{k}^{m-1}-1/2$  (or  $x_k^m=x_{k-1}^{m-1}+1/2$ in case the left clock rings) then it remains put.
 \item otherwise it jumps to the right by one and so do all particles $x_{k+l}^{m+l}$   with $x_{k+l}^{m+l}=x_{k}^m+l/2$ for $l=1,2,.\ldots$ (in case the left clock rings all  particles $x_{k}^{m+l}$ with   $x_{k}^{m+l}=x_k^m-l/2$ jump to the left for $l=1,2,\ldots$).
 \end{enumerate}
 Hence a particle that wants to jump is blocked by particles with  lower $m$-index, but it pushes particles at a higher $m$-index.

Next we specify the rate of the exponential clocks. For odd $m$ particles jump to the right with rate $\eps^{-1}>0$ and they jump to the left  with rate $\eps$. For even $m$ the  particles jump to the right with  rate $\eps$ and to the  left with rate $\eps^{-1}$. We are interested in the case where $\eps$ is small. Hence the particles for odd $m$  predominantly try to jump to the right. For even $m$ they mostly jump to  the left. However, they are still subject to the interlacing condition. Due to this condition, some particles that want to jump to the left (or right) are blocked, and even pushed to the right (or left) by particles at a lower level that want to travel right (or left).

If we let time evolve we will see mainly two clouds of particles travelling to the left and to the right respectively. See  Figure \ref{fig:pointconfig} for  typical point configuration after some time. For large  time we have the  macroscopic picture as shown in Figure \ref{fig:domainD}. With high probability the particles will be distributed in a domain $\mathcal D$ contained in the upper half plane. This domain consist of two parts $\mathcal D_1$ and $\mathcal D_2$. In $\mathcal D_2$ the particles are still densely pact as in the initial configuration (which means the particles did not have the chance to jump yet). In $\mathcal D_1$ the particles have a density strictly less than one. The boundary of $\mathcal D_1$ is a smooth curve except for two cusp points.  The cusps touch in the limit $\eps\downarrow 0$.

After rescaling the time parameter, the process has a well-defined limit for $\eps\downarrow 0$. In this limit the particles come in pairs. Indeed, by the interlacing condition some particles are blocked and even pushed by a particle at one level below that jumps in the reverse direction, thus forming a pair. The pairs are slanted to the right in the right half plane and slanted to the left in the left half plane, see also Figure \ref{fig:pointconfig}. The process for these pairs can be described in the following way: the process decouples in the sense  that we have two independent processes, one in the upper left quadrant and the other in the upper right quadrant. The process in the right quadrant is equivalent to the process where particles can only jump to the right. The process at the left is just its reflected version (hence the particle only jump to the left). This process is analyzed in \cite{BorFer} and from their results we recover that the limiting domain has a tacnode.


By standard arguments we can compute the limiting mean density of the particles in all cases. This settles the macroscopic behavior for the particles at  large time.  At the local scale we retrieve the well-known universality classes.

First consider $\eps>0$. If we zoom in at a point  away from the boundary, then we find that the local correlations are governed by one of the extensions to the discrete sine process that falls into the class as introduced in \cite{Bor}. If we zoom in at a point at the boundary, but not the cusps points, then we obtain the Airy process (see \cite{PS} and  \cite{F} for a review). The local correlations near the cusps are determined by the Pearcey process \cite{ABK,BH1,BH2,ORpearcey,TrW}. Since the proofs of these results follow from standard computations, they will be omitted. We do not need these statement for our main results.

In the case $\eps=0$ we have a decoupled system. In addition to a discrete sine process and the Airy process we also obtain the local correlations around the tacnode, which  are described by a process that is directly related to the GUE minor process (as we will prove).

The main  result of the paper is to derive the process around the tacnode in the  transition regime $\eps \downarrow 0$.  The main result is that we obtain a process that has not appeared in the literature (to the best of our knowledge). Naturally, it  interpolates between the Pearcey process and a process related to the GUE minor process.

In Section 2 we will state our main results and prove them in Section 3.
\section{Statement of results}

We start with some definitions. Let $\mathcal X$ be a discrete set. A point process on $\mathcal X$ is a probability measure on $2^\mathcal X$. A point process is completely determined by its correlation functions
\begin{equation}
\rho(X)=\mathop{{\rm Prob}} \{ Y \in 2^\mathcal X \mid X\subset Y\}.
\end{equation}
A point process is called determinantal, if there exists a kernel $K:\mathcal X\times \mathcal X\to \C$ such that
\begin{align}
\rho(X)=\det \left[K(x,y)\right] _{x,y\in X}.
\end{align}
For more details on determinantal point processes we refer to \cite{BorDet,HKPV,J,K,L,Sosh,Sosh2}. A determinantal point process is completely determined by its kernel.

Now let us return to the evolution on the interlacing particle system as decribed in the introduction. By stopping the process at time $t$, we get a random collection of points on the grid $\mathcal G$. Hence, the Markov process at time $t$ defines a point process on $\mathcal G$. In \cite{BorFer}, the authors proved that this is in fact a determinantal point process on $\mathcal  G$ with kernel $K$ given by
\begin{multline} \label{eq:kernel}
K(x_1,m_1;x_2,m_2)=
-\frac{\chi_{m_1<m_2}}{2\pi{\rm i}} \oint_{\Gamma_0} (1- \eps w)^{[m_1/2]-[m_2/2]}(1-  \eps/w)^{[(m_1+1)/2]-[(m_2+1)/2]}w^{[x_1]-[x_2]} \frac{{\rm d}w}{w}\\
+
\frac{1}{(2\pi{\rm i})^2} \oint_{\Gamma_0} {\rm d} w \oint_{\Gamma_{\eps,\eps^{-1}}}{\rm d}z\
\frac{
{\rm e}^{t(w+\frac{1}{w})}(1- \eps w)^{[m_1/2]}(  1-\eps/w)^{[(m_1+1)/2]}w^{[x_1]}
}
{{\rm e}^{t(z+\frac{1}{z})}(1- \eps z)^{[m_2/2]}( 1-\eps/z)^{[(m_2+1)/2]}z^{[x_2]}}
\frac{1}{z(w-z)},
\end{multline}
where $[x]$ is the largest integer less then $x$. Here $\Gamma_0$ is a contour that encircles the essential singularity $0$ but not the poles $\eps$ and $\eps^{-1}$. The contour $\Gamma_{\eps,\eps^{-1}}$  encircles $\eps$ and $\eps^{-1}$. Both $\Gamma_0$ and $\Gamma_{\eps,\eps^{-1}}$ have anti-clockwise orientation and do not intersect each other. Finally,
\begin{align}
\chi_{m_1<m_2}=\left\{\begin{array}{ll} 1, & \textrm{ if } m_1<m_2 \\ 0, & \textrm{otherwise.} \end{array}\right.
\end{align}

To be precise, the variables we use are different from \cite{BorFer}. We choose  a symmetric picture since we have particles jumping both  left and right, whereas the particular model that was analyzed in detail in \cite{BorFer} has particles jumping to the right only. Now \eqref{eq:kernel} is obtained by taking the kernel in \cite[Cor. 2.26]{BorFer} and substituting $y=[x]-[(m+1)/2)]$ and setting the  $\alpha_l$'s for even values of $l$ to $\eps$, and the other ones to $\eps^{-1}$. And finally, a conjugation by $(-\eps)^{[(m_2+1)/2]-[(m_1+1)/2)]}$ which does not effect the determinants in the correlation functions.\\

Our first result is that for large time we obtain the limiting situation as described in the Introduction and shown in Figure \ref{fig:domainD}.

\begin{figure}[t]\begin{center}
\subfigure[]{\begin{overpic}[scale=0.33]{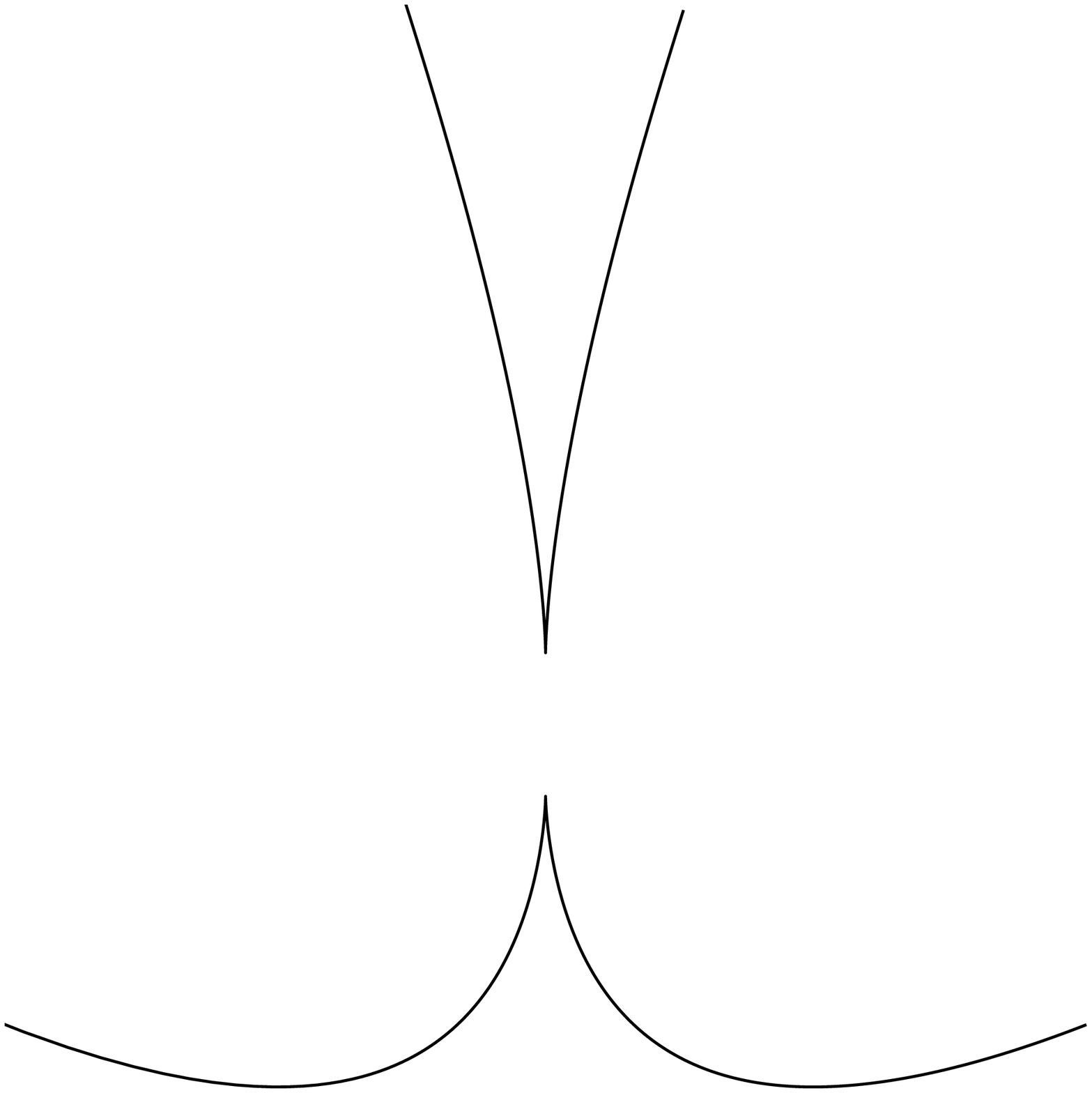}
\put(88.75,50.62){\makebox(0,0)[cc]{$\mathcal D_1$}}
\put(49.75,80.62){\makebox(0,0)[cc]{$\mathcal D_2$}}
\thicklines
\put(-1,1.5){\line(1,0){102}}
\put(90,0){\vector(1,0){10}}
\put(95,-3){\makebox(0,0)[cc]{$x$}}
\put(10,80){\vector(0,1){10}}
\put(5,85){\makebox(0,0)[cc]{$m$}}
\end{overpic}}
\hspace{1cm}
\subfigure[]{
\begin{overpic}[scale=0.33]{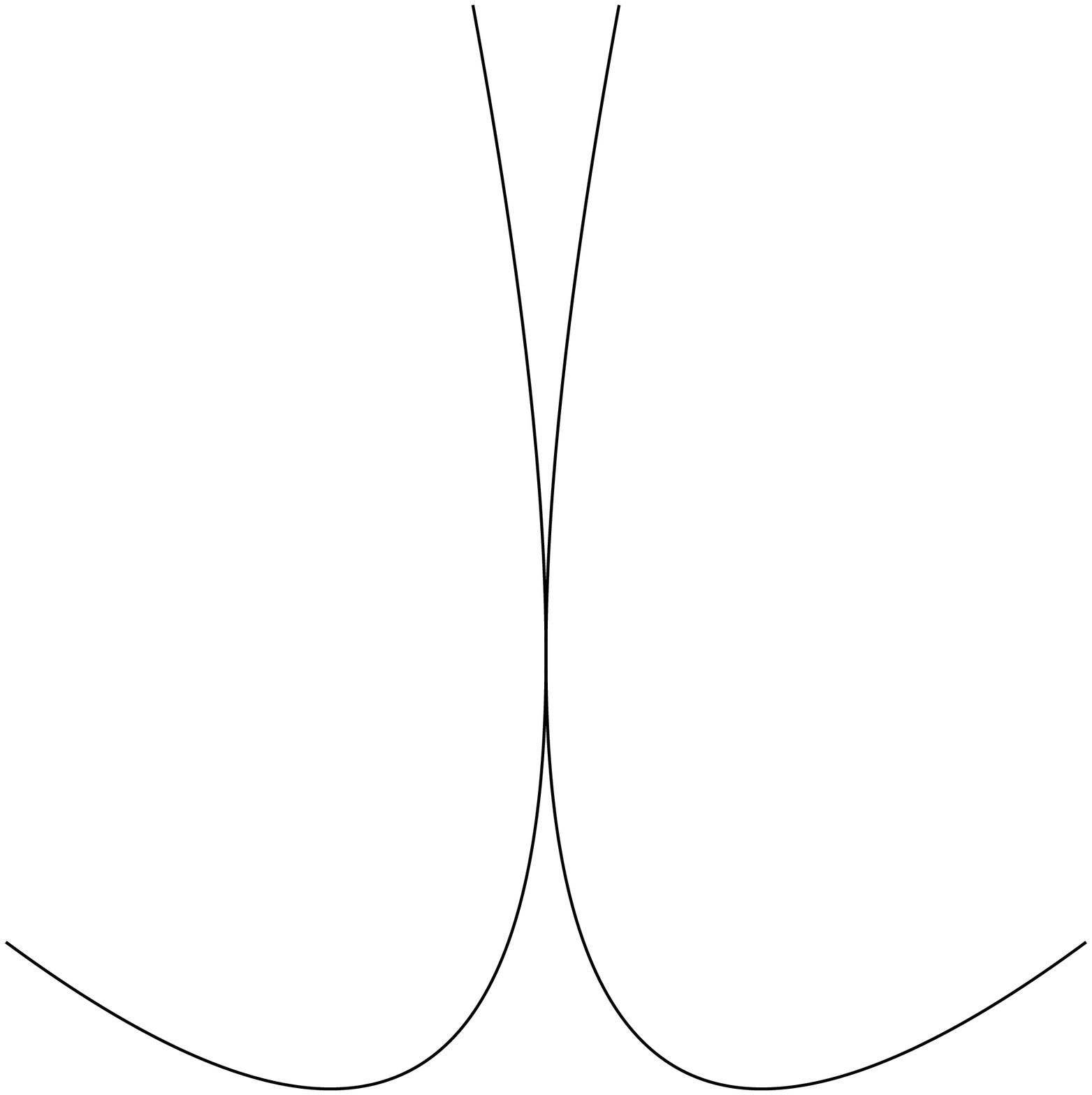}
\put(92.75,51.62){\makebox(0,0)[cc]{$\mathcal D_1$}}
\put(49.75,80.62){\makebox(0,0)[cc]{$\mathcal D_2$}}
\thicklines
\put(-1,1.5){\line(1,0){102}}
\put(90,0){\vector(1,0){10}}
\put(95,-3){\makebox(0,0)[cc]{$x$}}
\put(10,80){\vector(0,1){10}}
\put(5,85){\makebox(0,0)[cc]{$m$}}
\end{overpic}}

\caption{The typical shape of the limiting domains $\mathcal D_1$ and $\mathcal D_2$. The case $\eps>0$ at the left and $\eps=0$ at the right. In $\mathcal D_1$ the particle are still in the initial configuration. Outside $\mathcal D_1$ and $\mathcal D_2$ there are no particles.} \label{fig:domainD}

\end{center}
\end{figure}

\begin{theorem} \label{th:macro}
Let $\mathbb H=\{z\in \C \mid \Im x>0\}$ and   $F:\mathbb H \to \C$ given by
\begin{align}
F(z)=\tau (z+z^{-1})+\frac{\mu }{2} \log\left(1+\eps^{2}-\eps(z+z^{-1})\right)-\xi \log z.
\end{align}
Define $\mathcal D_1$  by
\begin{align}
\mathcal D_1=\{ (\xi,\mu)\in \R\times \R_+  \mid \exists z \in \mathbb H  \quad  F'(z)=0\}.
\end{align}
The boundary $\partial D_1$ has two cusp points located at
\begin{align}
(0,\eps+\eps^{-1}\pm 2).
\end{align}
Set
\begin{align}
\left\{
\begin{array}{l}
t=\tau L\\
m=[\mu L]\\
x=[\xi L]
\end{array}
\right.\end{align}
then the limiting mean density is given by
\begin{align}
\lim_{L\to \infty} K(x, m, x,m)=
 \left\{
 \begin{array}{ll}
 1, & (\xi,\mu)\in \mathcal D_2,\\
 \frac{1}{\pi} \arg z(\xi,\mu) , & (\xi,\mu)\in \mathcal D_1,\\
 0, & \textrm{ otherwise}.
 \end{array}
 \right.
\end{align}
where $z(\xi,\mu)$ is the unique solution in the upper half plane of the equation $F'(z)=0$.
\end{theorem}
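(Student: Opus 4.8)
The plan is to run a double steepest descent analysis of the integral \eqref{eq:kernel}. On the diagonal the indicator $\chi_{m_1<m_2}$ vanishes, so only the double contour integral survives. Substituting $t=\tau L$, $m=[\mu L]$, $x=[\xi L]$, both $[m/2]$ and $[(m+1)/2]$ equal $\tfrac{\mu}{2}L+O(1)$ and $[x]=\xi L+O(1)$, so, using $(1-\eps z)(1-\eps/z)=1+\eps^{2}-\eps(z+z^{-1})$, the ratio of numerator and denominator in the double integral of \eqref{eq:kernel} equals ${\rm e}^{L(F(w)-F(z))}\bigl(1+O(1/L)\bigr)$ uniformly for $w,z$ on contours avoiding the singularities $0,\eps,\eps^{-1}$, with $F$ the phase function of the statement. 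Hence
\begin{equation*}
K(x,m;x,m)=\frac{1+O(1/L)}{(2\pi{\rm i})^{2}}\oint_{\Gamma_0}{\rm d}w\oint_{\Gamma_{\eps,\eps^{-1}}}{\rm d}z\ \frac{{\rm e}^{L(F(w)-F(z))}}{z(w-z)},
\end{equation*}
and everything reduces to the asymptotics of this integral.

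Next I would analyze the saddle points, the zeros of $F'$. Clearing the denominators $z^{2}(z-\eps)(z-\eps^{-1})$ (the poles of $F'$) turns $F'(z)=0$ into a quartic with real coefficients, whose roots are either all real, or two real plus a conjugate pair, or two conjugate pairs; one checks that in the relevant region at most one conjugate pair occurs, so that $\mathcal D_1$ is exactly the set of $(\xi,\mu)$ for which $F$ has a (unique) saddle $z(\xi,\mu)\in\mathbb H$. Its boundary $\partial\mathcal D_1$ is the locus where the conjugate pair merges onto $\R$ (a real double root of the quartic), and the cusps of $\partial\mathcal D_1$ are where three saddles coalesce (a real triple root). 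Since $F(1/z;\xi,\mu)=F(z;-\xi,\mu)$ up to the additive term, $\mathcal D_1$ is symmetric in $\xi$ and the cusps lie on $\xi=0$, where $F'(z)=0$ factors as $\tfrac{z^{2}-1}{z^{2}}\bigl(\tau+\tfrac{\mu}{2}\tfrac{z}{(z-\eps)(z-\eps^{-1})}\bigr)=0$; a triple root then forces the quadratic factor $Q(z)=\tau(z-\eps)(z-\eps^{-1})+\tfrac{\mu}{2}z$ to have a double root at $z=1$ or $z=-1$, and solving $Q(\pm1)=Q'(\pm1)=0$ gives the two stated cusp points.

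For the asymptotics, fix $(\xi,\mu)$ and deform $\Gamma_0$ to a steepest descent contour $\gamma_w$ for $\Re F$ through the relevant saddle(s), along which $\Re F(w)\le\Re F(z(\xi,\mu))$, and $\Gamma_{\eps,\eps^{-1}}$ to a steepest ascent contour $\gamma_z$, along which $\Re F(z)\ge\Re F(z(\xi,\mu))$. Each time the deformation requires pushing one contour across the other, Cauchy's theorem contributes the residue of the integrand at $w=z$, which equals $1/z$; collecting these,
\begin{equation*}
K(x,m;x,m)=\frac{1}{2\pi{\rm i}}\int_{C}\frac{{\rm d}z}{z}+\frac{1+O(1/L)}{(2\pi{\rm i})^{2}}\oint_{\gamma_w}{\rm d}w\oint_{\gamma_z}{\rm d}z\ \frac{{\rm e}^{L(F(w)-F(z))}}{z(w-z)},
\end{equation*}
where $C$ is the arc along which $\gamma_w$ and $\gamma_z$ overlap. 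On $\gamma_w\times\gamma_z$ one has $\Re(F(w)-F(z))\le0$ with equality only at the saddles, so the remaining double integral is $O(1/L)\to0$. It then remains to determine $C$: outside $\overline{\mathcal D_1\cup\mathcal D_2}$ the two contours separate without crossing, $C=\varnothing$, and the limit is $0$; in $\mathcal D_1$ the overlap is forced and runs from $\overline{z(\xi,\mu)}$ to $z(\xi,\mu)$ without winding around the origin, so $\tfrac{1}{2\pi{\rm i}}\int_{C}\tfrac{{\rm d}z}{z}=\tfrac{1}{2\pi{\rm i}}\bigl(\log z(\xi,\mu)-\log\overline{z(\xi,\mu)}\bigr)=\tfrac1\pi\arg z(\xi,\mu)$; and in $\mathcal D_2$ the deformation forces $C$ to be a full positively oriented loop around $0$, giving $\tfrac{1}{2\pi{\rm i}}\oint_{C}\tfrac{{\rm d}z}{z}=1$.

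The local non-degenerate saddle estimate for the double integral is routine. The main obstacle is the \emph{global} contour geometry: explicitly constructing $\gamma_w$ and $\gamma_z$, tracking the topology of the level set $\{\Re F=\Re F(z(\xi,\mu))\}$ together with the positions of the real saddles relative to $0,\eps,\eps^{-1}$ as $(\xi,\mu)$ moves through the three regions, and from this pinning down $C$ in each case --- in particular verifying that $C$ is a non-winding arc precisely on $\mathcal D_1$ and a full loop precisely on $\mathcal D_2$, which is where the ``frozen'' value $1$ comes from. This identification, combined with the root structure of the quartic, is the crux of the argument.
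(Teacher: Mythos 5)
Your proposal is precisely the standard double-contour steepest descent argument that the paper itself invokes but does not carry out: the authors explicitly omit the proof of Theorem \ref{th:macro} as ``standard,'' referring to \cite{BorFer} and \cite{O}, so there is no written proof to compare against, and your outline correctly assembles all of its ingredients --- the vanishing of the single integral on the diagonal, the phase function $F$ with $(1-\eps w)(1-\eps/w)=1+\eps^2-\eps(w+w^{-1})$, the classification of $\mathcal D_1$, $\mathcal D_2$ and the void region via the root configuration of the quartic $F'=0$, the cusps as real triple roots on $\xi=0$, and the residue arc $C$ yielding $\tfrac1\pi\arg z(\xi,\mu)$, $1$, or $0$. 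The one point worth making explicit is that solving $Q(\pm1)=Q'(\pm1)=0$ actually gives $\mu=2\tau(\eps+\eps^{-1}\pm2)$, so the stated cusp coordinates $(0,\eps+\eps^{-1}\pm2)$ presuppose the normalization $\tau=1/2$ (consistent with the remark after the theorem that the cusps differ by $4$); beyond that, the part you flag as the crux --- the global geometry of the steepest descent/ascent contours determining $C$ in each region --- is exactly the ``standard'' verification the paper declines to write out, and your sketch is otherwise sound.
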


The proof of this result follows from standard steepest descent analysis on the double integral formula for the kernel. Because the proof is standard, we will omit it in this paper. We do not use Theorem \ref{th:macro} in the sequel. See \cite{BorFer} for a proof of a similar statement in a comparable situation or \cite{O} for an exposition of the steepest descent technique on double integral formulas. \\

\begin{remark}
The boundary $\partial \mathcal D_1$ can be explicitly computed. Indeed, it is clear that
\begin{align}
\partial \mathcal D_1=\{(\xi,\mu)\in \R\times \R_+ \mid \exists z \in \R  \quad F'(z)=0 \wedge  F''(z)=0\}.
\end{align}
Now for each $z\in \R$ we have that
\begin{align}
\left\{
\begin{array}{l}
F'(z)=0\\
F''(z)=0
\end{array}\right.
\end{align}
is system of equations that is linear in $\xi$ and $\mu$. So we can easily express $(\xi,\mu)$ as a function of $z\in \R$. In fact, it is easily checked that for each $z\in \R \setminus \{0,\eps,\eps^{-1}\}$ the corresponding $(\xi,\mu)$ satisfy $\xi\in\R$ and $\mu\geq 0$ so that $(\xi,\mu)$ is a point on $\partial \mathcal D_1$. Therefore, the closure of the image of $\R\setminus\{0,\eps,\eps^{-1}\}$ under this map $z\mapsto (\xi,\mu)$ gives the boundary. The cusp pointing down corresponds to $z=-1$ and the cusp pointing up to $z=1$. The boundary touches the $x$-axis at $z=\eps,\eps^{-1}$.
\end{remark}
From Theorem \ref{th:macro} it follows that we can achieve the situation of a tacnode in the following way. For every fixed $\eps$ the cusp points on the boundary differ by $4$. The location tends to infinity when we take the limit $\eps\downarrow 0$. By  rescaling $\mu$ with $\eps$ the cusp points have  a limit  as $\eps\downarrow 0$ and the gap between the cusp points  vanishes, resulting in a tacnode.

 From Theorem \ref{th:macro} and \eqref{eq:kernel} we expect to arrive at a process around the tacnode when we scale \begin{align}\label{eq:newparamCrK}
\left\{
\begin{array}{l}
t=\epsilon L\\
m_j=[L^2(1+\mu_j /L)]\\
\eps=\epsilon/L
\end{array}
\right.
\end{align}
However, it is less clear how to describe the process that arises at the cusp. As shown in Figure \ref{fig:demo}, there will be long vertical strings of particles. In fact, the length of these strings will be of order $L$ and hence the density of the particles  at this scale will diverge. Therefore we do not obtain a point process in the limit if we consider the process on the particles.

\begin{figure}
\begin{center}
\includegraphics[scale=0.5]{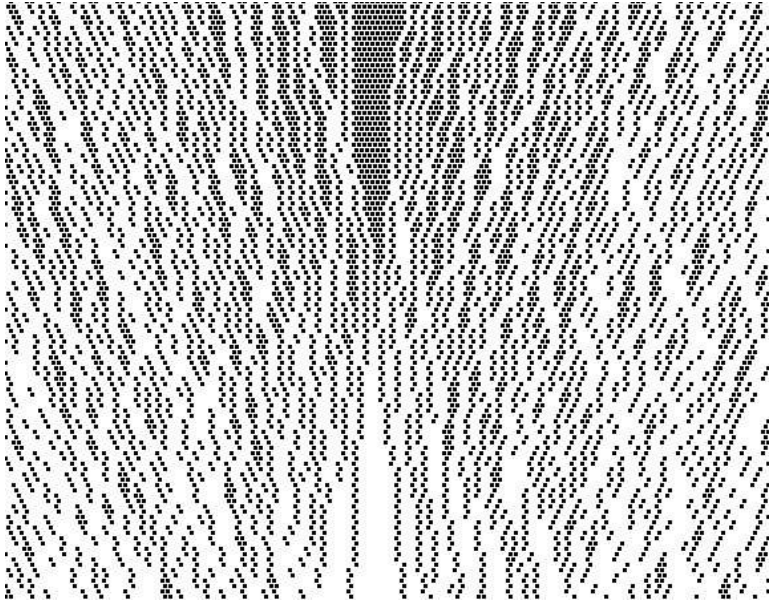}
\end{center}
\caption{A  close up picture of the process, for $\eps=0$, around the point where the cusps touch.  Note that the particle come in pairs that are slanted to the right in the right half and to the left in the left half. Around the tacnode, there are forming long vertical strings of particles. }\label{fig:demo}
\end{figure}
There are several ways of constructing a meaningful process in the limit. Perhaps the most straightforward approach is the following: instead of allowing $\mu_j$  to be a free variable, we choose $N\in \N$ and fix $\mu_j \in \R$ for $j=1,\ldots, N$ with $\mu_j\neq \mu_k$ if $j \neq k$ and cut  the process at the section $m_j=[L^2(1+\mu_j/L)]$. For simplicity, we will also shift the particles on any even $m_j$-section to the left by a half. In this way,  we obtain a determinantal point process on $\Z\times (1,\ldots,N)$ for each $L$ with  correlation functions
\begin{align}
  \mathop{\mathrm{Prob}}( \{\textrm{particle at }(x_k,j_k)\in \Z \times (1,\ldots, N) \mid k=1,\ldots,n\})=\det \left(K(x_k,m_{j_k},x_l,m_{j_l})\right)_{k,l=1}^n
\end{align}
for all $n\in \N$.

Now to obtain the limiting process as $L \to \infty$ for the limiting process on $\Z\times (1,\ldots,N)$, it suffices to compute the pointwise limit for the kernel $K$. The limit is given in the next theorem, which is the main result of the present paper.

\begin{figure}[t]\begin{center}
\input{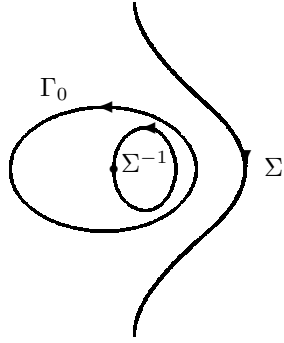}
\end{center}
\caption{The contours of integration in the kernel $\mathcal K^\epsilon$ } \label{fig:contoursCrK}

\end{figure}
\begin{theorem} \label{th:Keps}
With $t,m_j$ as in \eqref{eq:newparamCrK}  we have that
\begin{align}
\lim_{L\to \infty} K(x_1,m_1,x_2,m_2)=\mathcal K^{\epsilon}(x_1,\mu_1,x_2,\mu_2)
\end{align}
for $(x_j,\mu_j)\in \Z\times \R$, where
\begin{multline} \label{eq:kernelCr}
\mathcal K^{\epsilon}(x_1,\mu_1,x_2,\mu_2) =
-\frac{1}{\chi_{\mu_1<\mu_2}} \int_{\Gamma_0} {\rm e}^{\epsilon(\mu_2-\mu_1)(w+1/w)} w^{x_1-x_2-1}{\rm d} w
\\
+\frac{1}{(2\pi{\rm i})^2}\oint_{\Gamma_0} \oint_{\Sigma\cup \Sigma^{-1}}\frac{{\rm e}^{\epsilon \mu_2 (z+\frac{1}{z})+\frac{\epsilon^2}{2}(z+\frac{1}{z})^2}w^{x_1}}
{{\rm e}^{\epsilon \mu_1 (w+\frac{1}{w})+\frac{\epsilon^2}{2}(w+\frac{1}{w})^2}z^{x_2}}
\frac{{\rm d}z {\rm d}w}{z(w-z)}
\end{multline}
The contours of integration and their orientation are as indicated in Figure \ref{fig:contoursCrK}. More precisely, $\Gamma_0$ is a contour encircling the origin with counter clockwise orientation. The contour $\Sigma$ is a contour connecting $-{\rm i}\infty$ to ${\rm i}\infty$ that does not intersect $\Gamma_0$ and stays in the right-half plane.
\end{theorem}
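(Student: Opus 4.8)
The plan is to carry out an asymptotic analysis of the double contour integral in \eqref{eq:kernel} under the scaling \eqref{eq:newparamCrK}. Since the target formula \eqref{eq:kernelCr} is again a contour integral in the \emph{unrescaled} variables $w,z$, the argument is not a reduction to an explicit answer but rather: deform the $L$-dependent contours $\Gamma_0$ and $\Gamma_{\eps,\eps^{-1}}$ to the fixed contours $\Gamma_0$ and $\Sigma\cup\Sigma^{-1}$ of Figure \ref{fig:contoursCrK}, and then exchange the limit $L\to\infty$ with the integrations. The single integral carrying $\chi_{m_1<m_2}$ is the easy part: on the compact contour $\Gamma_0$ the variable $w$ stays bounded away from $0$, and under \eqref{eq:newparamCrK} both $[m_1/2]-[m_2/2]$ and $[(m_1+1)/2]-[(m_2+1)/2]$ are of the form $aL+O(1)$ with $a$ proportional to $\mu_1-\mu_2$; combining the elementary uniform limit $(1-\eps w)^{aL+O(1)}\to e^{-\epsilon a w}$ (recall $\eps=\epsilon/L\to 0$) with its analogue for $1-\eps/w$ shows that this term converges uniformly to the first term of \eqref{eq:kernelCr}.

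For the double integral I would first justify the contour deformation. Because $[m_1/2]$ and $[(m_1+1)/2]$ are positive for $L$ large, $w=\eps$ and $w=\eps^{-1}$ are \emph{zeros} of the $w$-integrand, so the $w$-contour may be moved freely among simple loops around the origin that avoid the pole $w=z$, and I deform it to the fixed $\Gamma_0$. The $z$-contour $\Gamma_{\eps,\eps^{-1}}$ must be deformed to $\Sigma\cup\Sigma^{-1}$, and here one must keep track of the poles at $z=\eps\to 0$ and $z=\eps^{-1}\to\infty$ that drift off the fixed contour. Their contributions vanish in the limit: near $z=\eps=\epsilon/L$ the factor $e^{-t/z}=e^{-\epsilon L/z}$ has modulus of size $e^{-cL^2}$, and near $z=\eps^{-1}=L/\epsilon$ the factor $e^{-tz}=e^{-\epsilon L z}$ has modulus of size $e^{-cL^2}$, and this super-exponential decay dominates the at-most-factorial growth obtained when one differentiates the order-$O(L^2)$ poles $(1-\eps/z)^{-[(m_2+1)/2]}$ and $(1-\eps z)^{-[m_2/2]}$, so a Cauchy-estimate computation bounds those residues by $O(e^{-cL^2})$. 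The essential singularity at $z=0$ is never crossed because $\Gamma_0$ is kept on the inside of $\Sigma^{-1}$.

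On the fixed product contour $\Gamma_0\times(\Sigma\cup\Sigma^{-1})$ I would then identify the pointwise limit of the integrand. Expanding $\log(1-\eps w)=-\eps w-\tfrac12\eps^2 w^2+O(\eps^3)$ and likewise for $\log(1-\eps/w)$, and inserting \eqref{eq:newparamCrK}, the terms of order $L$ in $t(w+1/w)+[m_1/2]\log(1-\eps w)+[(m_1+1)/2]\log(1-\eps/w)$ cancel, the remaining terms converge to $-\epsilon\mu_1(w+1/w)-\tfrac{\epsilon^2}{2}(w+1/w)^2$ up to an additive constant that cancels against the analogous constant coming from $z$, and the error is $O(1/L)$; since also $w^{[x_1]}=w^{x_1}$, the integrand converges to that of \eqref{eq:kernelCr}. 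To interchange limit and integral I would exhibit an $L$-independent dominating function: on $\Gamma_0$ the integrand is uniformly bounded, while along $\Sigma$ and $\Sigma^{-1}$ one writes the modulus of the $z$-dependent factor as $\exp(L^2\,\mathrm{Re}\,\psi_L(z))$, checks that $\mathrm{Re}\,\psi_L(z)\le -c<0$ for $|z|$ large (using $\mathrm{Re}\,z>0$ on $\Sigma$) and $\le C$ otherwise, and thereby bounds it by a constant times $|e^{\frac{\epsilon^2}{2}(z+1/z)^2}|$; this is integrable on $\Sigma\cup\Sigma^{-1}$ precisely because those arcs run off to $\pm{\rm i}\infty$ inside the sectors where $\mathrm{Re}\,(z+1/z)^2>0$, and the same estimate controls the portion of $\Sigma^{-1}$ that comes close to the moving pole $z=\eps$. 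Dominated convergence then yields the theorem.

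The main obstacle is exactly this uniform analysis: showing that the prelimit integrand, which carries exponentials of order $L$ and $L^2$, neither blows up anywhere along the fixed contours nor leaves behind non-vanishing residues when the drifting poles $\eps$ and $\eps^{-1}$ are deformed away --- in essence, controlling the competition between the super-exponential decay of $e^{-t(z+1/z)}$ (and of $e^{-tz}$, $e^{-t/z}$ taken separately) and the large powers $[m_j/2]\sim L^2$. By comparison, the identification of the pointwise limit and the $\chi_{m_1<m_2}$-term are routine.
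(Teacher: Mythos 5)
Your overall framework (reposition the contours to fixed, $L$-independent ones, identify the pointwise limit of the integrand, then dominate and pass to the limit) is the same as the paper's, and your identification of the limiting integrand and of the sectors in which $\Sigma$ must eventually lie is essentially what the paper does. But there is a genuine gap at the very first step, the contour repositioning, and it is not a technicality: it changes the answer. In \eqref{eq:kernel} the $w$-contour $\Gamma_0$ must \emph{exclude} $\eps$, while the $z$-contour encircles $\eps$; when you replace $\Gamma_0$ by a fixed loop around the origin, that loop eventually contains $\eps=\epsilon/L$ and must therefore sweep across the piece of the $z$-contour surrounding $\eps$, picking up the residue of $1/(w-z)$ at $w=z$. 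You assert the $w$-contour ``may be moved freely among simple loops around the origin that avoid the pole $w=z$'', but no fixed loop around the origin can be reached from the original $\Gamma_0$ without crossing that pole once $\eps$ lies inside it. This residue, $-\frac{1}{2\pi{\rm i}}\oint_{\Gamma_\eps}\frac{G_{t,m_1,x_1}(z)}{G_{t,m_2,x_2}(z)}\frac{{\rm d}z}{z}$, is precisely the content of the paper's Lemma \ref{lem:deform}, and it is not an error term: it supplies the \emph{entire} first term of \eqref{eq:kernelCr}. Indeed, your claim that the original single integral converges to that first term is false. Writing $(1-\eps/w)^{[(m_1+1)/2]-[(m_2+1)/2]}=(w-\eps)^{b}w^{-b}$ with $b\sim(\mu_1-\mu_2)L/2\to-\infty$, the integrand of the single integral in \eqref{eq:kernel} has a zero of order $-b+x_1-x_2-1\to+\infty$ at $w=0$ and is analytic inside the small contour that excludes $\eps$, so that integral is identically $0$ for all large $L$; the first term of \eqref{eq:kernelCr} is by contrast a nonzero Bessel-type coefficient. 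Without the $w=z$ residue your limiting kernel is missing the whole $\chi_{\mu_1<\mu_2}$ term.

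A secondary issue concerns the $z$-contour. Once the contours are arranged as in Lemma \ref{lem:deform}, the fixed contours $\Sigma$ (closed up at $\infty$) and $\Sigma^{-1}$ (closed up at $0$) still \emph{enclose} $\eps^{-1}$ and $\eps$ respectively, so no residues at those poles are abandoned and nothing needs to be estimated there; what actually has to be justified is pushing the $z$-contours through the essential singularities at $0$ and $\infty$, which the paper does via the decay of $1/G_{t,m_2,x_2}(z)$ in the stated sectors. Your alternative --- letting the poles escape and bounding the abandoned residues by ``super-exponential decay dominates at-most-factorial growth'' --- is not established by that one line: the residue of a pole of order $\sim cL^2$ involves $\sim cL^2$ derivatives and produces factors of size $L^{cL^2}$ that are not obviously beaten by $e^{-cL^2}$, and a crude Cauchy estimate on a small circle about $\eps$ does not close this. (Also, for integrability along $\Sigma$ you need $\mathrm{Re}\,(z+1/z)^2<0$ there, not $>0$.)
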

To the best of our knowledge, the kernel $K^\epsilon$  has not appeared in the existing literature yet.

It is not difficult to show that after inserting the new parameters given in \eqref{eq:newparamCrK} in the integrands in \eqref{eq:kernel} and taking the pointwise limit as $L\to \infty$,  one obtains the integrands as given in the right-hand side of \eqref{eq:kernelCr}. However,  there is an important technical issue that needs to be taken care of. Note that the integrand in the double integral contains poles at $\eps$ and $\eps^{-1}$, and also an essential singularity at $0$. By taking the limit, the pole approaches  the essential singularity at the origin, which complicates the contour deformation in the analysis. \\

A different way of creating a point process is the following. Instead of considering the location of the particles, one could consider the statistics of the upper endpoints of the vertical strings of particles. We will restrict our process only to the odd $m$-sections. Then $(x,m)$ is defined to be a upper endpoint if there is a particle at $(x,m)$ but there is no particle at $(x,m+2)$. The upper endpoints form a point process with correlation functions
 \begin{multline}
 \tilde \rho_N((x_1,m_1),\ldots,(x_N,m_N))=\mathop{\mathrm{Prob}}\left(\textrm{particle at } (x_j,m_j) \textrm{ and no particle at } (x_j,m_j+2) \mid \, j=1,\ldots,N\right)
 \end{multline}
 for $(x_j,m_j)\in \Z\times (2\N-1)$ and $N\in \N$.

 \begin{theorem}\label{th:endpoints}
 With $t=\epsilon L$ and $m_j$ the closest odd integer to $L^2(1+\mu_j/L)$, we have that
 \begin{align}\label{eq:corendpoints}
\lim_{L\to \infty} L^N \tilde \rho_N((x_1,m_1),\ldots,(x_N,m_N))=\epsilon^N \det\left(\mathcal K^\epsilon(x_i-1,\mu_i,x_j,\mu_j)+K^\eps(x_i+1,\mu_i,x_j,\mu_j) \right)_{i,j}^N
\end{align}
for $(x_j,\mu_j)\in \Z\times\R$ for $j=1,\ldots,N$.
\end{theorem}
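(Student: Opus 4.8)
The plan is to express the correlation functions $\tilde\rho_N$ of the upper endpoints in terms of the correlation functions of the original particle process at finite $L$, and then pass to the limit using Theorem \ref{th:Keps}. The key combinatorial observation is that, for fixed $L$, the event ``particle at $(x_j,m_j)$ and no particle at $(x_j,m_j+2)$'' is an inclusion–exclusion combination of particle-occupation events, so that $\tilde\rho_N$ is a signed sum of $2^N$ correlation functions of the underlying determinantal process. Concretely, writing $\eta(x,m)$ for the occupation variable, one has
\begin{align}
\tilde\rho_N = \mathbb{E}\prod_{j=1}^N \eta(x_j,m_j)\bigl(1-\eta(x_j,m_j+2)\bigr),
\end{align}
and expanding the product gives a sum over subsets $S\subset\{1,\dots,N\}$ with sign $(-1)^{|S|}$ of the $(N+|S|)$-point correlation function evaluated at the points $(x_j,m_j)$ for all $j$ together with $(x_j,m_j+2)$ for $j\in S$.

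The second step is to observe that each such correlation function is a determinant of the kernel $K$, and that a signed sum of such determinants can itself be written as a single $N\times N$ determinant. This is the standard fact (used e.g.\ in the theory of conditioned determinantal processes and ``thinning''/``particle–hole'' manipulations) that if a determinantal process has kernel $K$ on a set containing pairs of points, then the process obtained by looking at $\eta(x)(1-\eta(x'))$ at paired locations is again determinantal with kernel obtained by the substitution $K(\,\cdot\,,\,\cdot\,)\mapsto K(x_i,\cdot) - K(x_i',\cdot)$ in the appropriate rows. Here the pairing is $m_j \leftrightarrow m_j+2$, and after the shift conventions of the model the two relevant rows correspond to $x_i-1$ and $x_i+1$ on section $m_i$ (one unit on each side, because of the half-integer shift on even sections and the interlacing geometry of the vertical strings). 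This produces, at finite $L$,
\begin{align}
\tilde\rho_N = \det\bigl(K(x_i-1,m_i,x_j,m_j) - K(x_i-1,m_i,x_j,m_j+2) + K(x_i+1,m_i,x_j,m_j) - K(x_i+1,m_i,x_j,m_j+2)\bigr)_{i,j}.
\end{align}

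The third step is the limit transition. Multiplying by $L^N$ distributes one factor of $L$ into each row of the determinant. By Theorem \ref{th:Keps}, $K(x_i\mp1,m_i,x_j,m_j)\to\mathcal K^\epsilon(x_i\mp1,\mu_i,x_j,\mu_j)$ pointwise; the subtlety is the two terms with $m_j$ replaced by $m_j+2$, which correspond to shifting $\mu_j \mapsto \mu_j + 2/L$, an infinitesimal shift. The difference $K(x_i\mp1,m_i,x_j,m_j) - K(x_i\mp1,m_i,x_j,m_j+2)$, multiplied by $L$, should converge to $-2\partial_{\mu_j}\mathcal K^\epsilon(x_i\mp1,\mu_i,x_j,\mu_j)$ — \emph{except} that one must be careful: the single factor of $L$ per row cannot simultaneously produce both a finite limit for the un-shifted terms and a derivative for the differences. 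The resolution is that in the combination the leading (un-shifted) pieces must cancel in pairs or, more precisely, one keeps the whole bracket together: $L\bigl[K(x_i-1,m_i,\cdot) + K(x_i+1,m_i,\cdot) - K(x_i-1,m_i,m_j+2) - K(x_i+1,m_i,m_j+2)\bigr]$, and one shows by inspecting the explicit integral formula \eqref{eq:kernel} with the parametrization \eqref{eq:newparamCrK} that each row of $L^N\tilde\rho_N$ converges to $\epsilon\bigl(\mathcal K^\epsilon(x_i-1,\mu_i,x_j,\mu_j)+\mathcal K^\epsilon(x_i+1,\mu_i,x_j,\mu_j)\bigr)$ after all cancellations; the factor $\epsilon$ emerges from the rate of the clocks, i.e.\ from the $t(w+1/w)$ versus $t(z+1/z)$ exponents once $t=\epsilon L$ and $m_j+2$ shifts the exponents of the $(1-\eps w)$ factors by a controlled amount. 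Tracking these exponent shifts and confirming that the net effect per row is exactly multiplication by $\epsilon$ and addition of the $x_i\pm1$ kernels is the main computation.

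The step I expect to be the main obstacle is the limit transition in the double-integral term, for exactly the reason flagged after Theorem \ref{th:Keps}: the pole at $\eps=\epsilon/L$ collides with the essential singularity at the origin as $L\to\infty$, so one cannot naively take the limit inside the contour integrals. One must first deform $\Gamma_0$ and $\Gamma_{\eps,\eps^{-1}}$ to $L$-independent contours (the $\Gamma_0$, $\Sigma\cup\Sigma^{-1}$ of Figure \ref{fig:contoursCrK}) — picking up residue contributions that account for the single-integral term — and only then estimate the integrands uniformly and apply dominated convergence, now with the additional $m_j\mapsto m_j+2$ shifts and the overall factor $L^N$. Since the contour deformation and uniform bounds are precisely the ones already established in the proof of Theorem \ref{th:Keps}, the work here is to check that those bounds survive the extra factor of $L$ per row, which they do because the finite difference in $\mu$ of size $2/L$ supplies the compensating smallness; the bookkeeping of the four terms per row, and verifying the clean final form with coefficient $\epsilon^N$, is where care is required but no new idea is needed.
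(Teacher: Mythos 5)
Your first step (inclusion--exclusion over the occupation variables) is fine, but your second step rests on a ``standard fact'' that does not exist, and this is a genuine gap. For a determinantal process the quantity $\mathbb{E}\prod_{j}\eta(x_j,m_j)\bigl(1-\eta(x_j,m_j+2)\bigr)$ is, by the complementation principle, a $2N\times 2N$ block determinant built from $K(x_i,m_i,x_j,m_j)$, $-K(x_i,m_i+2,x_j,m_j)$, $K(x_i,m_i,x_j,m_j+2)$ and $\delta_{ij}-K(x_i,m_i+2,x_j,m_j+2)$; it is \emph{not} in general an $N\times N$ determinant whose entries are linear combinations of kernel values. Already for $N=1$ the left-hand side $K(a,a)-K(a,a)K(b,b)+K(a,b)K(b,a)$ is quadratic in $K$, while your claimed identity is linear in $K$, so your finite-$L$ formula for $\tilde\rho_N$ is false and everything downstream of it is built on sand. (One can also check, using the recurrences below, that your entry is not even asymptotically equal to $\eps\bigl(K(x_i+1,m_i,x_j,m_j)+K(x_i-1,m_i,x_j,m_j)\bigr)$: the finite differences in $m_j$ produce shifts of $x_j$ in the \emph{second} argument, not of $x_i$ in the first.)

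The missing idea is the set of exact recurrence relations satisfied by this particular kernel: expanding one factor $(1-\eps w)(1-\eps/w)=1+\eps^2-\eps(w+1/w)$ in the integrand of \eqref{eq:kernel} shows that replacing $m$ by $m\pm 2$ in either argument equals $(1+\eps^2)$ times the unshifted kernel minus $\eps$ times the sum of the kernels with the corresponding $x$ shifted by $\pm 1$, plus an explicit single-integral correction supported on $m_1=m_2-2$. Feeding these identities into the $2N\times 2N$ determinant and performing row and column operations makes the matrix asymptotically block triangular, with upper-left block $I+\OO(\eps^2)$ and lower-right block $\eps\bigl(K(x_i+1,m_i,x_j,m_j)+K(x_i-1,m_i,x_j,m_j)\bigr)+\OO(\eps^2)$. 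This is where both the shifts $x_i\pm 1$ and the factor $\eps^N=\epsilon^N/L^N$ come from --- from the coefficient of $\eps$ in $(1-\eps w)(1-\eps/w)$ acting as a shift operator in $x$, not from the interlacing geometry or the clock rates as you suggest. Only after this algebraic reduction does one invoke Theorem \ref{th:Keps} entrywise; at that stage no factor of $L$ has to be extracted from a $\mu$-difference of size $2/L$, so the derivative-versus-limit tension you flag in your third step simply does not arise --- it is an artifact of the incorrect finite-$L$ identity.
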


\begin{remark}
Another  argument for the fact that the lengths of the vertical strings of particles must be of order $L$, is that one can prove that the process restricted to two horizontal sections that are close, is just constructed of two copies of the process restricted to one of these horizontal sections. To be precise, let $\mu_1\in \R$ and take $m_1=[L^2(1+\mu_1/L)]$. For each $L$ let $m_2$ be such that $m_1-m_2=o(L)$ as $L\to \infty$. For simplicity, we assume that $m_2 \geq m_1$. Then it is not difficult to prove that
\begin{align}
\lim_{L \to \infty} K(x_1,m_1,x_2,m_1)&=\mathcal K^\epsilon(x_1,\mu_1,x_2,\mu_1),\\
\lim_{L \to \infty} K(x_1,m_2,x_2,m_2)&=\mathcal K^\epsilon(x_1,\mu_1,x_2,\mu_1),\\
\lim_{L \to \infty} K(x_1,m_2,x_2,m_1)&=\mathcal K^\epsilon(x_1,\mu_1,x_2,\mu_1),\\
\lim_{L \to \infty} K(x_1,m_1,x_2,m_2)&=-\delta_{x_1,x_2}+ \mathcal K^\epsilon(x_1,\mu_1,x_2,\mu_1),
\end{align}
for all $x_1,x_2\in \Z$. Hence,  for $x_1,\ldots,x_l, y_1,\dots,y_k$  we have
\begin{multline}
\lim_{L\to \infty} \mathrm{Prob}(\textrm{particles at }(x_1,m_1),\ldots,(x_l,m_1),(y_{1},m_2),\ldots, (y_k,m_2))\\
=\det
\begin{pmatrix}
\left(\mathcal K^\epsilon(x_i,\mu_1,x_j,\mu_1)\right)_{i,j=1}^l & \left(\mathcal K^\epsilon(x_i,\mu_1,y_j,\mu_1)-\delta_{x_i.x_j}\right)_{i=1,j=1}^{l,k} \\
\left(\mathcal K^\epsilon(y_i,\mu_1,x_j,\mu_1)\right)_{i=1,j=1}^{k,l} & \left(\mathcal K^\epsilon(y_i,\mu_1,y_j,\mu_1)\right)_{i,j=1}^k.
\end{pmatrix}
\end{multline}
This implies that (in the limit $L\to \infty$) the process on the line  $m_2$ is just a copy of the process on the line $m_1$. \end{remark}

We will now derive some properties of the kernel $\mathcal K^{\eps}$. The following proposition shows the symmetry in the kernel.

\begin{proposition} \label{prop:sym}
We have that
\begin{enumerate}
\item $\mathcal K^\epsilon(-x_1,\mu_1,-x_2,\mu_2)=\mathcal K^\epsilon(x_1-1,\mu_1,x_2-1,\mu_2)$.
\item $(-1)^{x_1-x_2}\mathcal K^\epsilon(x_1,-\mu_1,x_2,-\mu_2)=\delta_{(x_1,\mu_1),(x_2,\mu_2)}- \mathcal K^\epsilon(x_1,\mu_1,x_2,\mu_2)$
\end{enumerate}
for all $(x_j,\mu_j)\in \Z\times \R$.
\end{proposition}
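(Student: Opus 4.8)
The plan is to obtain both identities directly from the double contour integral \eqref{eq:kernelCr} by elementary changes of variable, the only real work being the accompanying contour bookkeeping.

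\emph{Part (1).} I would substitute $w\mapsto 1/w$ and $z\mapsto 1/z$ in every integral appearing in $\mathcal K^\epsilon(-x_1,\mu_1,-x_2,\mu_2)$. Since $w+w^{-1}$ and $z+z^{-1}$ are invariant, all exponential factors (including the $\tfrac{\epsilon^2}{2}(\cdot)^2$ terms) are unchanged; the monomial $w^{-x_1}z^{x_2}$ becomes $w^{x_1}z^{-x_2}$; the differentials contribute Jacobians $-w^{-2}$ and $-z^{-2}$ and reverse each contour; and $1/(z(w-z))\mapsto wz^2/(z-w)$. Collecting the extra powers of $w,z$ with all the signs, the double-integral integrand becomes precisely that of $\mathcal K^\epsilon(x_1-1,\mu_1,x_2-1,\mu_2)$ (using $w^{x_1}/z^{x_2}=(w/z)\,w^{x_1-1}/z^{x_2-1}$), and likewise for the single integral. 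It then remains to match the transformed contours with the original ones: $w\mapsto1/w$ sends $\Gamma_0$ to a large clockwise circle about the origin, which can be deformed back to $\Gamma_0$ because in $w$ the integrand is holomorphic on $\C\setminus\{0\}$; and $z\mapsto1/z$ carries $\Sigma\cup\Sigma^{-1}$ onto itself by the very definition of $\Sigma^{-1}$, the orientation reversal being absorbed by the Jacobian sign. This would establish part (1).

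\emph{Part (2).} Here I would substitute $w\mapsto -w$ and $z\mapsto -z$ in $\mathcal K^\epsilon(x_1,-\mu_1,x_2,-\mu_2)$. Now $w+w^{-1}\mapsto-(w+w^{-1})$ and $z+z^{-1}\mapsto-(z+z^{-1})$, so $e^{\mp\epsilon\mu_j(\cdot)}\mapsto e^{\pm\epsilon\mu_j(\cdot)}$ --- the sign of each $\mu_j$ is restored --- while the even factors $e^{\frac{\epsilon^2}{2}(\cdot)^2}$ are untouched; $w^{x_1}/z^{x_2}$ acquires a factor $(-1)^{x_1-x_2}$ which is cancelled by the prefactor $(-1)^{x_1-x_2}$; $1/(z(w-z))$ and the product of differentials are invariant; and $\Gamma_0$ maps to itself. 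The only substantive changes are that the indicator in the single integral becomes $\chi_{\mu_1>\mu_2}$, and the $z$-contour $\Sigma\cup\Sigma^{-1}$ (which sits in the right half-plane) is sent to its mirror image $-(\Sigma\cup\Sigma^{-1})$ in the left half-plane. Consequently $(-1)^{x_1-x_2}\mathcal K^\epsilon(x_1,-\mu_1,x_2,-\mu_2)+\mathcal K^\epsilon(x_1,\mu_1,x_2,\mu_2)$ equals the single integral of $\mathcal K^\epsilon$ weighted now by $\chi_{\mu_1<\mu_2}+\chi_{\mu_1>\mu_2}=1-\delta_{\mu_1,\mu_2}$, plus the double integral of $\mathcal K^\epsilon$ in which $z$ runs over the \emph{closed} contour $(\Sigma\cup\Sigma^{-1})\cup(-(\Sigma\cup\Sigma^{-1}))$. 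I would then evaluate this last integral by residues in $z$: arranging the contours to steer clear of the essential singularities at $z=0$ and $z=\infty$, the only pole enclosed is $z=w$, with residue $-e^{\epsilon(\mu_2-\mu_1)(w+1/w)}w^{x_1-x_2-1}$, i.e. (up to sign) the integrand of the single-integral term. Adding the two contributions, the weights $1-\delta_{\mu_1,\mu_2}$ and $1$ would combine to $\delta_{\mu_1,\mu_2}$, and $\tfrac{\delta_{\mu_1,\mu_2}}{2\pi{\rm i}}\oint_{\Gamma_0}w^{x_1-x_2-1}\,{\rm d}w=\delta_{(x_1,\mu_1),(x_2,\mu_2)}$, which is part (2). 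Conceptually, this is the particle--hole duality induced by the up--down reflection symmetry of the dynamics, under which the odd and even levels --- and with them the half-planes carrying $\Sigma$ and $-\Sigma$ --- are interchanged.

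\emph{Main obstacle.} The substitution algebra is entirely routine; the delicate step is the contour analysis in part (2). One has to verify that $-(\Sigma\cup\Sigma^{-1})$ together with $\Sigma\cup\Sigma^{-1}$ really does bound a region interacting with $\Gamma_0$ in such a way that closing up and applying the residue theorem captures exactly the pole $z=w$ for every $w\in\Gamma_0$ and picks up nothing from the essential singularity at the origin, and that every orientation and sign is tracked correctly --- this is what makes the diagonal $\delta_{(x_1,\mu_1),(x_2,\mu_2)}$ (rather than $0$ or $-\delta$) come out. The analogous but milder point in part (1) is the deformation of the image of $\Gamma_0$ back to $\Gamma_0$.
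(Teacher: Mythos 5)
Your proposal is correct and follows essentially the same route as the paper: part (1) is the inversion $z\mapsto 1/z$, $w\mapsto 1/w$, and part (2) is the reflection $z\mapsto -z$, $w\mapsto -w$ combined with the switch of the $z$-contour from $\Sigma\cup\Sigma^{-1}$ to $-(\Sigma\cup\Sigma^{-1})$ and the residue at $z=w$; the paper merely performs the contour switch \emph{before} substituting (rewriting $\mathcal K^\epsilon$ as in \eqref{eq:kernelepsleft}) whereas you substitute first and then combine, which is the same computation in a different order.

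One remark on the step you single out as the main obstacle. Your stated plan --- arrange the contours so that the closed $z$-contour ``picks up nothing from the essential singularity at the origin'' --- cannot be realized literally for the pieces $\Sigma$ and $-\Sigma$: both must stay outside $\Gamma_0$, hence on opposite sides of the origin, so closing them up at $\pm{\rm i}\infty$ (legitimate, since the integrand decays in the sectors $\pi/4+\delta<|\arg z|<\pi/2-\delta$) produces the boundary of a strip that necessarily contains both $\Gamma_0$ and the origin, and the Laurent coefficient of $z^{-1}$ at the essential singularity is generically nonzero. What saves the argument is the other half of the contour: $\Sigma^{-1}$ and $-\Sigma^{-1}$ are two loops tangent to each other at the origin (approaching it inside the good sectors), and their concatenation can be deformed into a single small loop encircling the origin with orientation \emph{opposite} to that of the strip boundary, so that the two contributions of $\mathrm{Res}_{z=0}$ cancel and only $\mathrm{Res}_{z=w}$ survives. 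With that cancellation in place, and with the orientation of $\Sigma$ taken as in Figure \ref{fig:contoursCrK} (it is the opening-up of the positively oriented circle $\Gamma_{\eps^{-1}}$, so it runs from $+{\rm i}\infty$ to $-{\rm i}\infty$; the phrase ``connecting $-{\rm i}\infty$ to ${\rm i}\infty$'' in the statement of Theorem \ref{th:Keps} should not be read as fixing the orientation), the residue term enters with the sign that makes the weights combine to $+\delta_{(x_1,\mu_1),(x_2,\mu_2)}$ exactly as you claim.
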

The first property shows that our point process is invariant with respect to the transform $x\mapsto -1-x$.

To interpret the second symmetry property in this proposition, we note that if $\mathcal P$ is a determinantal point process on a discrete set $\mathcal X$ with kernel $K$, then we have that $1-K$ is the kernel of the determinantal point process $\mathcal P'$ defined by $\mathcal P'(X)=\mathcal P(\mathcal X\setminus X)$, for $X\subset \mathcal X$. This process is sometimes referred to as the complementary process. It is obtained by replacing particles with holes and vice versa. For more details on this particle hole involution  we refer to the appendix of \cite{BOO}.\\

In the final results of this paper we investigate the limiting behavior of the kernel as $\epsilon\downarrow 0$ and $\epsilon \to\infty$. We start with the first case.
\begin{figure}[t]
\begin{center}
\subfigure{\ifx\JPicScale\undefined\def\JPicScale{0.45}\fi
\unitlength \JPicScale mm

\begin{picture}(80,110)(-25,0)

\linethickness{0.4mm}
\thicklines
\put(-20,0){\vector(1,0){100}}
\put(-20,0){\vector(0,1){110}}

\put(-28,100){\makebox(0,0)[cc]{$\mu$}}
\put(80,-5){\makebox(0,0)[cc]{$x$}}

\put(0,-5){\makebox(0,0)[cc]{\small{$0$}}}
\put(20,-5){\makebox(0,0)[cc]{\small{$1$}}}
\put(40,-5){\makebox(0,0)[cc]{\small{$2$}}}
\put(60,-5){\makebox(0,0)[cc]{\small{$3$}}}


\put(0,0){\line(0,1){2.5}}
\put(20,0){\line(0,1){2.5}}
\put(40,0){\line(0,1){2.5}}
\put(60,0){\line(0,1){2.5}}

\put(0,50){\circle{5}}

\put(0,52.5){\line(0,1){57.5}}

\put(20,75){\circle{5}}
\put(20,25){\circle{5}}

\put(20,77.5){\line(0,1){32.5}}
\put(20,27.5){\line(0,1){22.5}}

\put(40,90){\circle{5}}
\put(40,60){\circle{5}}
\put(40,15){\circle{5}}

\put(40,92.5){\line(0,1){17.5}}
\put(40,62.5){\line(0,1){12.5}}
\put(40,17.5){\line(0,1){7.5}}

\put(60,95){\circle{5}}
\put(60,65){\circle{5}}
\put(60,35){\circle{5}}
\put(60,10){\circle{5}}

\put(60,97.5){\line(0,1){12.5}}
\put(60,67.5){\line(0,1){22.5}}
\put(60,37.5){\line(0,1){22.5}}
\put(60,12.5){\line(0,1){2.5}}

\put(2.5,50){\psline[linestyle=dotted](0,0)(0.7,0)}

\put(22.5,75){\psline[linestyle=dotted](0,0)(0.7,0)}
\put(22.5,25){\psline[linestyle=dotted](0,0)(0.7,0)}

\put(42.5,90){\psline[linestyle=dotted](0,0)(0.7,0)}
\put(42.5,60){\psline[linestyle=dotted](0,0)(0.7,0)}
\put(42.5,15){\psline[linestyle=dotted](0,0)(0.7,0)}

\put(62.5,95){\psline[linestyle=dotted](0,0)(0.7,0)}
\put(62.5,65){\psline[linestyle=dotted](0,0)(0.7,0)}
\put(62.5,35){\psline[linestyle=dotted](0,0)(0.7,0)}
\put(62.5,10){\psline[linestyle=dotted](0,0)(0.7,0)}

\end{picture}}
\hspace*{2cm}
\subfigure{\ifx\JPicScale\undefined\def\JPicScale{0.45}\fi
\unitlength \JPicScale mm

\begin{picture}(80,110)(-25,0)

\linethickness{0.4mm}
\thicklines

\put(-20,0){\vector(1,0){100}}
\put(-20,0){\vector(0,1){110}}


\put(0,-5){\makebox(0,0)[cc]{\small{$0$}}}
\put(20,-5){\makebox(0,0)[cc]{\small{$1$}}}
\put(40,-5){\makebox(0,0)[cc]{\small{$2$}}}
\put(60,-5){\makebox(0,0)[cc]{\small{$3$}}}

\put(-28,30){\makebox(0,0)[cc]{\small{$\mu_1$}}}
\put(-28,40){\makebox(0,0)[cc]{\small{$\mu_2$}}}
\put(-28,70){\makebox(0,0)[cc]{\small{$\mu_3$}}}
\put(-28,100){\makebox(0,0)[cc]{\small{$\mu_4$}}}


\put(0,0){\line(0,1){2.5}}
\put(20,0){\line(0,1){2.5}}
\put(40,0){\line(0,1){2.5}}
\put(60,0){\line(0,1){2.5}}


\put(0,50){\circle{5}}

\put(0,52.5){\line(0,1){57.5}}

\put(20,75){\circle{5}}
\put(20,25){\circle{5}}

\put(20,77.5){\line(0,1){32.5}}
\put(20,27.5){\line(0,1){22.5}}

\put(40,90){\circle{5}}
\put(40,60){\circle{5}}
\put(40,15){\circle{5}}

\put(40,92.5){\line(0,1){17.5}}
\put(40,62.5){\line(0,1){12.5}}
\put(40,17.5){\line(0,1){7.5}}

\put(60,95){\circle{5}}
\put(60,65){\circle{5}}
\put(60,35){\circle{5}}
\put(60,10){\circle{5}}

\put(60,97.5){\line(0,1){12.5}}
\put(60,67.5){\line(0,1){22.5}}
\put(60,37.5){\line(0,1){22.5}}
\put(60,12.5){\line(0,1){2.5}}

\put(-20,100){\psline[linestyle=dashed](4.2,0)(0,0)}
\put(-20,70){\psline[linestyle=dashed](4.2,0)(0,0)}
\put(-20,40){\psline[linestyle=dashed](4.2,0)(0,0)}
\put(-20,30){\psline[linestyle=dashed](4.2,0)(0,0)}

\put(0,100){\circle*{5}}
\put(20,100){\circle*{5}}
\put(40,100){\circle*{5}}
\put(60,100){\circle*{5}}

\put(0,70){\circle*{5}}
\put(40,70){\circle*{5}}
\put(60,70){\circle*{5}}

\put(20,40){\circle*{5}}
\put(60,40){\circle*{5}}
\put(20,30){\circle*{5}}

\end{picture}}
\end{center}
\caption{From the GUE minor process to the point process with kernel given in \eqref{eq:kerneleps=0}. The open circles represent the point $(x,y_x^l)$ from $l=0,\ldots,x$ and $x=0,1,\ldots$. In the left picture we draw the vertical lines. The dotted lines are only auxillary. In the right picture, we draw the lines associated to the choice of the $\mu_j$. The solid circles are the intersection point of the dashed horizontal and solid vertical line. The solid circles describe the process with kernel \eqref{eq:kerneleps=0}.}
\label{fig:GUE}
\end{figure}
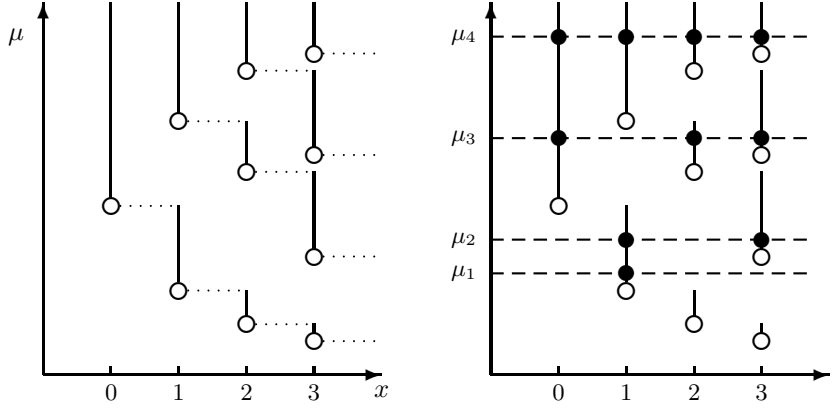
\begin{theorem}\label{th:hermite}
For $x_1,x_2<0$ we have that
\begin{multline}\label{eq:kerneleps=0a}
\lim_{\epsilon \downarrow 0}\epsilon^{x_1-x_2} \mathcal K^{\epsilon}(x_1, \mu_1,x_2,\mu_2)= -\chi_{\mu_1<\mu_2} \chi_{x_1\leq x_2} (\mu_2-\mu_1)^{x_2-x_1}\\
+\frac{1}{(2\pi{\rm i})^2} \oint_{\Gamma_0} \int_\Sigma \frac{{\rm e}^{\mu_2 z+\frac{1}{2}z^2}}{{\rm e}^{\mu_1 w+\frac{1}{2} w^2}}\frac{w^{x_1}}{z^{x_2+1}} \frac{{\rm d}z{\rm d}w}{w-z},
\end{multline}
 and for $x_1,x_2\geq 0$.
\begin{multline}\label{eq:kerneleps=0}
\lim_{\epsilon \downarrow 0}\epsilon^{x_2-x_1} \mathcal K^{\epsilon}(x_1, \mu_1,x_2,\mu_2)= -\chi_{\mu_1<\mu_2} \chi_{x_2\leq x_1} (\mu_2-\mu_1)^{x_2-x_1}\\
+\frac{1}{(2\pi{\rm i})^2} \oint_{\Gamma_0} \int_\Sigma \frac{{\rm e}^{\mu_2 z+\frac{1}{2}z^2}}{{\rm e}^{\mu_1 w+\frac{1}{2} w^2}}\frac{z^{x_2}}{w^{x_1+1}} \frac{{\rm d}z{\rm d}w}{w-z}.
\end{multline}
 Moreover, the limit of the  kernel vanishes if neither of the two conditions on $x_1$ and $x_2$ are satisfied.
 \end{theorem}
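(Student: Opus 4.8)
\emph{Strategy.} The plan is to write $\mathcal K^\epsilon=I_1^\epsilon+I_2^\epsilon$, where $I_1^\epsilon$ is the single integral and $I_2^\epsilon$ the double integral in \eqref{eq:kernelCr}, and to analyze the two pieces separately. First note that the reflection $x\mapsto-1-x$ interchanges the two monomial factors $w^{x_1}/z^{x_2+1}$ and $z^{x_2}/w^{x_1+1}$ and the normalizing powers $\epsilon^{x_1-x_2}$ and $\epsilon^{x_2-x_1}$, while fixing the exponential factors; hence Proposition \ref{prop:sym}(1) reduces \eqref{eq:kerneleps=0} to \eqref{eq:kerneleps=0a}, and from now on $x_1,x_2<0$. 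For $I_1^\epsilon$ the argument is elementary: since $\Gamma_0$ encircles only the origin, $\frac{1}{2\pi\mathrm i}\oint_{\Gamma_0}e^{\epsilon(\mu_2-\mu_1)(w+1/w)}w^{x_1-x_2-1}\,dw$ is the coefficient of $w^{x_2-x_1}$ in $e^{\epsilon(\mu_2-\mu_1)w}\,e^{\epsilon(\mu_2-\mu_1)/w}$, which I extract by expanding the two exponentials as a double power series. The lowest surviving power of $\epsilon$ occurs exactly when $x_2-x_1\ge0$, and after multiplying by $\epsilon^{x_1-x_2}$ and letting $\epsilon\downarrow0$ only that term survives, leaving the single-integral term of \eqref{eq:kerneleps=0a} together with its indicators $\chi_{\mu_1<\mu_2}$ and $\chi_{x_1\le x_2}$.

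\emph{The double integral.} The crucial step here is to take the contours at scale $1/\epsilon$ — $\Gamma_0=\{|w|=r/\epsilon\}$, $\Sigma=\{\operatorname{Re}z=c/\epsilon\}$ with $0<r<c$ fixed, which is legitimate since the value of the integral is unchanged under homotopies of the contours — and then to substitute $w=\omega/\epsilon$, $z=\zeta/\epsilon$. In terms of the rescaled variables the hard exponent $\epsilon\mu_j(u+u^{-1})+\tfrac{\epsilon^2}{2}(u+u^{-1})^2$ becomes $\mu_j v+\tfrac12 v^2+O(\epsilon^2)$, uniformly for the rescaled variable $v$ in compact subsets of $\C^*$; the one-form $\frac{dz\,dw}{z(w-z)}$ is scale invariant; and the monomial factor $w^{x_1}/z^{x_2}$ produces exactly the power $\epsilon^{x_2-x_1}$ that cancels the normalizing prefactor. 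I then split the $z$-contour as $\Sigma\cup\Sigma^{-1}$. The $\Sigma$-part rescales onto the fixed line $\{\operatorname{Re}\zeta=c\}$ and the fixed circle $\{|\omega|=r\}$; the rescaled integrand converges pointwise to the integrand of the double integral in \eqref{eq:kerneleps=0a}, and one passes to the limit by dominated convergence, using the Gaussian decay of $|e^{\zeta^2/2}|$ along $\{\operatorname{Re}\zeta=c\}$, uniform boundedness on $\{|\omega|=r\}$, and $|w-z|$ bounded below (the contours are kept well separated). The $\Sigma^{-1}$-part is negligible: after the substitution it localizes near the origin, and its leading contributions — in particular the residue generated when the pole $w=z$ crosses $\Gamma_0$ — cancel, because there $w=z$ migrates into the essential singularity, so that after normalization this part is $o(1)$. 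Adding the two contributions proves \eqref{eq:kerneleps=0a}, and reflection gives \eqref{eq:kerneleps=0}.

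\emph{The mixed case.} If $x_1$ and $x_2$ have opposite signs I use the rescaling $w=\omega/\epsilon$, $z=\zeta/\epsilon$ when $x_1<0\le x_2$ and $w=\epsilon/\omega$, $z=\epsilon/\zeta$ when $x_2<0\le x_1$; in either case the two exponentials still converge to Gaussians, but now $w^{x_1}/z^{x_2}$ comes multiplied by a power of $\epsilon$ of one fixed sign which tends to $0$, so $I_2^\epsilon\to0$. Likewise $I_1^\epsilon\to0$, since its relevant Taylor coefficient sits at order $|x_1-x_2|\ge1$. Hence $\mathcal K^\epsilon\to0$, which is the last assertion of the theorem.

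\emph{Main obstacle.} I expect the delicate point to be the contour bookkeeping near the essential singularity at the origin. After pushing $\Gamma_0$ and $\Sigma$ out to scale $1/\epsilon$ and rescaling, one must re-deform the rescaled contours to the fixed contours of the limit kernel without letting the two components of the rescaled $z$-contour or the pole at $z=w$ ever collide with the origin, and one must supply $\epsilon$-uniform, integrable bounds valid along the full, non-compact contours. The essential quantitative input is the estimate on $|e^{\frac{\epsilon^2}{2}(z+1/z)^2}|$ along the tails of $\Sigma$, where $\operatorname{Re}(z+1/z)^2<0$; without it the dominated-convergence step breaks down and the $o(1)$ control of the $\Sigma^{-1}$-contribution is lost.
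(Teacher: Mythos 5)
Your overall route is the same as the paper's: reduce to one sign of $(x_1,x_2)$ via Proposition \ref{prop:sym}(1), rescale $w\mapsto w/\epsilon$, $z\mapsto z/\epsilon$ so that the exponent $\epsilon\mu_j(v+1/v)+\tfrac{\epsilon^2}{2}(v+1/v)^2$ becomes $\mu_j v+\tfrac12 v^2+O(\epsilon^2)$, and pass to the limit on fixed contours. Your power-series extraction for the single integral and the dominated-convergence argument on the $\Sigma$-part (using $\operatorname{Re}(\zeta+\epsilon^2/\zeta)^2\le \operatorname{Re}\zeta^2+C$ on $\operatorname{Re}\zeta=c$) are sound and match what the paper does more tersely. (As a side remark, both your computation and the paper's residue computation actually yield $(\mu_2-\mu_1)^{x_2-x_1}/(x_2-x_1)!$; the factorial is absent from the statement as printed.)

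The genuine gap is your treatment of the $\Sigma^{-1}$-component of the $z$-contour. The mechanism you describe --- a residue ``generated when the pole $w=z$ crosses $\Gamma_0$'' which then ``cancels'' because it ``migrates into the essential singularity'' --- does not correspond to anything in the argument: the contours are never moved past one another ($\Sigma^{-1}$ stays inside $\Gamma_0$, $\Sigma$ stays outside), so no residue is produced and there is nothing to cancel. More seriously, your analysis of the double integral never uses the hypothesis $x_1,x_2<0$, yet that hypothesis is precisely what kills the $\Sigma^{-1}$-contribution. The paper's (correct, one-line) reason is this: $\Sigma^{-1}$ is a \emph{closed} curve, and after the rescaling and the limit $\epsilon\downarrow 0$ the $z$-integrand becomes $e^{\mu_2 z+z^2/2}\,z^{-x_2-1}/(w-z)$, which for $x_2<0$ (so $-x_2-1\ge 0$) is analytic in the region bounded by $\Sigma^{-1}$ --- the pole $z=w$ lies outside it, on $\Gamma_0$ --- so $\oint_{\Sigma^{-1}}=0$ by Cauchy's theorem. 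Equivalently, in your rescaled picture $\Sigma^{-1}$ collapses to a curve of length $O(\epsilon^2)$ on which the integrand is $O(|\zeta|^{-x_2-1})$, bounded only because $x_2<0$; for $x_2\ge 0$ this piece does \emph{not} vanish and is exactly what (after the inversion $z\mapsto 1/z$) produces the $\Sigma$-integral with $z^{x_2}/w^{x_1+1}$ in \eqref{eq:kerneleps=0}. Without pinpointing where the sign of $x_2$ enters, your argument cannot distinguish the two cases of the theorem; the same omission affects your mixed case, where the paper instead observes that for $x_1\ge 0$ the limiting $w$-integrand has no pole at $w=0$, so $\oint_{\Gamma_0}$ of the limit vanishes.
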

The one dimensional version $\mu_1=\mu_2$ of this kernel has appeared before in the literature, see \cite{BO}.  By expanding the term $(w-z)^{-1}$ we can express the double integral as a sum of products of Hermite polynomials.

By inserting \eqref{eq:kerneleps=0} (or \eqref{eq:kerneleps=0a}) in \eqref{eq:corendpoints} we find the limiting process for the endpoints in the right (or left) half plane. It turns out that in each of the half planes one of the kernels in the determinant tends to zero as $\epsilon \downarrow 0$. In fact, the limiting kernel is the kernel corresponding to the GUE minor process, see for example \cite{JN,ORgue}. Indeed, in  the case $x_1,x_2\geq 0$ we rewrite \eqref{eq:corendpoints} as
\begin{align}
\lim_{L\to \infty} L^N \tilde \rho_N((x_1,m_1),\ldots,(x_N,m_N))= \det\left( \epsilon^{x_j+1-x_i} \mathcal K^\epsilon(x_i-1,\mu_i,x_j,\mu_j)+ \eps^2\epsilon^{x_j-1-x_j}  K^\epsilon(x_i+1,\mu_i,x_j,\mu_j) \right)_{i,j}^N,
\end{align}
and then by \eqref{eq:kerneleps=0} we find
\begin{align}
\lim_{\epsilon \downarrow 0} \lim_{L\to \infty} L^N \tilde \rho_N((x_1,m_1),\ldots,(x_N,m_N))= \det\left(K_{\mathrm{GUE}}(x_i,\mu_i,x_j,\mu_j) \right)_{i,j}^N,
\end{align}
where
\begin{multline}
K_{\mathrm{GUE}}(x_i,\mu_i,x_j,\mu_j) = -\chi_{\mu_1<\mu_2} \chi_{x_2< x_1} (\mu_2-\mu_1)^{x_2-x_1-1}
+\frac{1}{(2\pi{\rm i})^2} \oint_{\Gamma_0} \int_\Sigma \frac{{\rm e}^{\mu_2 z+\frac{1}{2}z^2}}{{\rm e}^{\mu_1 w+\frac{1}{2} w^2}}\frac{z^{x_2}}{w^{x_1}} \frac{{\rm d}z{\rm d}w}{w-z}.
\end{multline}
The case $x_1,x_2<0$ is similar. By comparing with \cite[Def. 1.2]{JN} we see that the kernel $K_{GUE}$ describes the GUE minor process.
\begin{remark}
Based upon the relation between the construction of the two processes just above and below Theorem \ref{th:Keps}, we see how the process defined by the kernel at the right-hand side of \eqref{eq:kerneleps=0} can be constructed out of  the GUE minor process explicitly: pick a point configuration random from the GUE minor process and denote the points by $(x,y^l_x)\in  \N\times \R$. At each vertical $x$ section we draw $x$ vertical line segments. Each segment has $y_x^l$ as a lower endpoint and $y_{x-1}^{l-1}$ as the upper endpoint. Here we define $y_x^0=\infty$. See also Figure \ref{fig:GUE}.
We next fix $N$ different real numbers $\mu_j\in \R$. Then we define a point process in $\N\times \{1,\ldots,N\}$ by
\begin{align}
\mathrm{Prob} \{\textrm{particle at } (x_i,j_i), \ i=1,\ldots k)\}=\mathrm{Prob}\{\textrm{Each }(x_i,\mu_{j_i}) \textrm{ is on a line segment}\}
\end{align}
The conclusion is that the new process is in fact a determinantal point process with kernel as in \eqref{eq:kerneleps=0}. \\
\end{remark}

The second situation that can be obtained is that for $\epsilon \to \infty$, in which the cusps should separate. Hence we expect to obtain the Pearcey process when we take the simultaneous limit $\mu_j\to -\infty$ (or $\mu_j\to +\infty$). The following Theorem states that we indeed find the Pearcey process at the lowest of the two cusps.

\begin{theorem}\label{th:pearcey}
Set
\begin{align}\label{eq:newparamP}
\left\{
\begin{array}{l}
\epsilon=M\\
\mu_j=- M(1-\nu_j/2M)\\
x=[\xi_j M^{1/2}]
\end{array}
\right.
\end{align}
Then
\begin{multline}\label{eq:kernelP}
\lim_{M\to\infty} \frac{{\rm e}^{M (\nu_1-\nu_2)}}{M^{1/2}}\mathcal K^{\eps}(x_1,\mu_1,x_2,\mu_2)=
-\frac{\chi_{\nu_1<\nu_2}}{2\pi{\rm i}}\int_{-{\rm i}\infty}^{{\rm i}\infty} {\rm d}w\ {\rm e}^{(\nu_2-\nu_1)w^2-(\xi_2-\xi_1)w}\\
+\frac{1}{(2 \pi{\rm i})^2}\int_{-{\rm i}\infty}^{{\rm i}\infty} \int_{\mathcal C} \frac{{\rm e}^{\frac{1}{2}z^4+\nu_2 z^2-\xi_2 z}}{ {\rm e}^{\frac{1}{2}w^4+\nu_1 w^2-\xi_1 w}}\frac{{\rm d}w {\rm d}z}{w-z}
\end{multline}
The contour $\mathcal C$ conists of four rays, from $\pm {\rm e}^{\pi {\rm i}/4}\infty $ to $0$ and from $0$ to   $\pm {\rm e}^{3\pi {\rm i}/4}\infty $ (see also  Figure \ref{fig:pearcey}).
\end{theorem}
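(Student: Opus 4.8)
The plan is to start from the explicit double-contour formula \eqref{eq:kernelCr} for $\mathcal K^\epsilon$, insert the scaling \eqref{eq:newparamP}, and perform a steepest-descent analysis on each of the two pieces separately. Write the integrand of the double integral in the form $\mathrm e^{L_M(f(z) - f(w))} \cdot (\text{rational factor})$ after extracting the leading behaviour; concretely, with $\epsilon=M$, $\mu_2 = -M(1-\nu_2/2M)$ and $x_2=[\xi_2 M^{1/2}]$, the exponent in $z$ is
\[
\epsilon\mu_2\Bigl(z+\tfrac1z\Bigr) + \tfrac{\epsilon^2}{2}\Bigl(z+\tfrac1z\Bigr)^2 - x_2\log z .
\]
First I would substitute $z = \mathrm e^{w/M^{1/2}}$-type local coordinates near the relevant critical point. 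The location of the cusp is dictated by Theorem \ref{th:macro}: the lower cusp corresponds to $z=-1$, so the natural change of variables is $z = -\mathrm e^{Z/M^{1/2}}$ (and similarly $w=-\mathrm e^{W/M^{1/2}}$), after which $z+1/z = -2 - Z^2/M + O(M^{-2})$ and $\log z = \mathrm i\pi + Z/M^{1/2}$. Substituting and expanding, the terms of order $M^2$ and $M^{3/2}$ must cancel (this is what selecting the cusp scaling \eqref{eq:newparamP} achieves — it is exactly the balance $F'=F''=F'''=0$ at $z=-1$), the order-$M$ and order-$M^{1/2}$ survivors reassemble into $\tfrac12 Z^4 + \nu_2 Z^2 - \xi_2 Z$, and $\mathrm dz/z \to \mathrm dZ/M^{1/2}$. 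The prefactor $\mathrm e^{M(\nu_1-\nu_2)}/M^{1/2}$ in the statement is precisely the bookkeeping factor absorbing the residual constant ($\mathrm e^{-2\epsilon\mu_j}$-type) terms and the Jacobian; I would track it carefully. The same computation applied to the single integral turns $\mathrm e^{\epsilon(\mu_2-\mu_1)(w+1/w)}w^{x_1-x_2-1}\,\mathrm dw$ into $\mathrm e^{(\nu_2-\nu_1)W^2 - (\xi_2-\xi_1)W}\,\mathrm dW$, with the $\chi$-factor and the $1/(2\pi\mathrm i)$ matching directly.

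The second ingredient is contour deformation. The contour $\Gamma_0$ around the origin, under $w=-\mathrm e^{W/M^{1/2}}$, opens up to a vertical line through the critical point, which after rescaling becomes the imaginary axis $\int_{-\mathrm i\infty}^{\mathrm i\infty}\mathrm dW$ — this is the steepest descent/ascent direction for the quartic $\tfrac12 W^4$, since on the imaginary axis $\mathrm{Re}(\tfrac12 W^4) = \tfrac12 |W|^4 > 0$ is where $w$ should live for the "$w$ in denominator" role, matching the sign in \eqref{eq:kernelP}. The contour $\Sigma\cup\Sigma^{-1}$, which passes to the right of $\Gamma_0$ and reaches $\pm\mathrm i\infty$, must be deformed so that near the critical point it follows the two descent rays of $-\tfrac12 Z^4$ through $z=-1$, i.e. the four rays from $\pm\mathrm e^{\pi\mathrm i/4}\infty$ to $0$ and from $0$ to $\pm\mathrm e^{3\pi\mathrm i/4}\infty$ — exactly the Pearcey contour $\mathcal C$. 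Here one uses that in the $\epsilon\to\infty$ regime the other cusp ($z=1$, the upper one) is pushed off to scales where it contributes nothing, so only the local behaviour near $z=-1$ matters. I would justify discarding the non-local parts of both contours by standard estimates: off a shrinking neighbourhood of the critical point, $\mathrm{Re}(f(z)-f(w))$ is bounded away from its critical value by a positive multiple of $M$ (or $M^{1/2}$ at the edges of the neighbourhood), making those contributions exponentially small uniformly.

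The main obstacle I expect is controlling the tails of the $\Sigma$-contour and the interchange of the $M\to\infty$ limit with the double integral, because after the change of variables the integrand on the non-compact parts of the rescaled contours is not obviously dominated — the quartic exponent grows, but one must check that the rescaled contour can actually be chosen inside the region $\mathrm{Re}(\tfrac12 Z^4 + \cdots) \to -\infty$ all the way out, and that the $O(M^{-1/2})$ corrections in the exponent (coming from higher terms in $z+1/z = -2 - Z^2/M - Z^4/(12 M^2)+\cdots$ and $\log z$) do not spoil this uniformly. Concretely one has to verify that the error $R_M(Z)$ in $\epsilon\mu_2(z+1/z)+\tfrac{\epsilon^2}{2}(z+1/z)^2 - x_2\log z = -(\text{large constant}) + \bigl(\tfrac12 Z^4 + \nu_2 Z^2 - \xi_2 Z\bigr) + R_M(Z)$ satisfies $R_M(Z)\to 0$ pointwise and $\mathrm{Re}\,R_M(Z)$ stays, say, $\le \tfrac14 \mathrm{Re}(Z^4)$ on the deformed contour out to infinity, so that dominated convergence applies. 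A secondary technical point, already flagged in the discussion after Theorem \ref{th:Keps}, is that the original contours $\Gamma_0$ and $\Sigma\cup\Sigma^{-1}$ must be chosen $L$- (here $M$-) dependent so that the pole at $z=w$ is handled correctly while both pass near the coalescing critical point; I would fix the shapes once and for all at scale $M^{-1/2}$ near $z=w=-1$ and argue this is legitimate since $\Sigma$ only needs to stay in the right half-plane and avoid $\Gamma_0$, both of which are preserved after the change of variables for $M$ large. Once these estimates are in place, the pointwise limit of the integrand together with the contour identification gives \eqref{eq:kernelP}.
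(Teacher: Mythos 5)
Your overall strategy is the same as the paper's (steepest descent on the double-contour formula \eqref{eq:kernelCr}, local variables at scale $M^{-1/2}$ around a critical point, deformation of $\Gamma_0$ and $\Sigma\cup\Sigma^{-1}$ onto steepest descent paths, tail estimates), but you have misidentified the critical point, and this breaks the central computation. After inserting \eqref{eq:newparamP}, the $M^2$-part of the exponent is proportional to $(z+1/z-2)^2=(z-1)^4/z^2$ (this is the combination that the scaling of $\epsilon\mu_j$ and $\epsilon^2/2$ is designed to produce). This function has its \emph{degenerate} (order-three) critical point at $z=+1$, where it vanishes to fourth order; that is where $z=1+\tilde z/M^{1/2}$ turns $\tfrac{M^2}{2}(z-1)^4/z^2$ into $\tfrac12\tilde z^4+o(1)$ and the Pearcey phase emerges. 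At $z=-1$, which you propose to expand around, $(z-1)^4/z^2=16$ and the critical point is non-degenerate: writing $z=-\mathrm e^{Z/M^{1/2}}$ one finds that the quadratic term in $Z$ carries a coefficient of order $M$ (the two $O(M)Z^2$ contributions from $\epsilon\mu_2(z+1/z)$ and $\tfrac{\epsilon^2}{2}(z+1/z)^2$ \emph{add} at $z=-1$ instead of cancelling as they do at $z=+1$), so your claimed reassembly into $\tfrac12Z^4+\nu_2Z^2-\xi_2Z$ fails there. Your reading of the Remark after Theorem \ref{th:macro} is also reversed: the lowest cusp is the one pointing up, which corresponds to $z=+1$, not $z=-1$; and consistently, the $z$-contour $\Sigma\cup\Sigma^{-1}$ lives in the right half-plane and naturally passes near $+1$, while reaching $-1$ would require a deformation through the left half-plane that the construction of $\mathcal K^\epsilon$ does not support. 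The same misidentification affects your treatment of the single integral: $w+1/w$ has saddles at $w=\pm1$, and since $\nu_2>\nu_1$ the dominant one is $w=+1$ (the contribution from $w=-1$ is exponentially smaller and is killed, not selected, by the prefactor $\mathrm e^{M(\nu_1-\nu_2)}$).

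With the critical point corrected to $z=w=+1$, the rest of your plan (local variables $w=1+\tilde w/M^{1/2}$, $z=1+\tilde z/M^{1/2}$, $\Gamma_0$ deformed to the unit circle so that it becomes the vertical line, $\Sigma\cup\Sigma^{-1}$ deformed onto the four steepest descent rays forming $\mathcal C$, and dominated-convergence control of the tails) is exactly what the paper does, so the remainder of your outline is sound once this is fixed.
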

The kernel given \eqref{eq:kernelP} is known in the literature as the extended Pearcey kernel that describes the Pearcey process, see  \cite{ABK,BH1,BH2,ORpearcey,TrW} for more details.

To conclude this section, note that if we combine Theorem \ref{th:pearcey} with the second symmetry property in Proposition \ref{prop:sym} then we see that the complementary process  near the top cusp  locally converges to the Pearcey process, as expected.

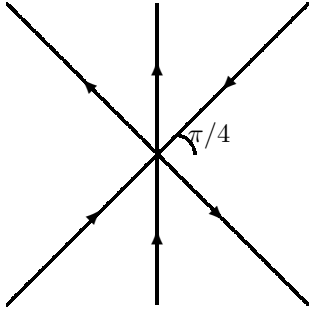
\begin{figure}[t] \begin{center}
\ifx\JPicScale\undefined\def\JPicScale{0.5}\fi
\unitlength \JPicScale mm
\begin{picture}(90,90)(0,0)
\thicklines
\linethickness{0.42mm}
\multiput(10,10)(0.12,0.12){667}{\line(1,0){0.12}}
\linethickness{0.42mm}
\multiput(10,90)(0.12,-0.12){667}{\line(1,0){0.12}}
\linethickness{0.42mm}
\put(50,10){\line(0,1){80}}
\linethickness{0.3mm}
\put(50,65){\line(0,1){10}}
\put(50,75){\vector(0,1){0.12}}
\linethickness{0.3mm}
\multiput(70,70)(0.12,0.12){31}{\line(1,0){0.12}}
\put(67,67){\vector(-1,-1){0.12}}
\linethickness{0.3mm}
\multiput(30,70)(0.12,-0.12){63}{\line(1,0){0.12}}
\put(30,70){\vector(-1,1){0.12}}
\linethickness{0.3mm}
\multiput(62.5,37.5)(0.12,-0.12){26}{\line(1,0){0.12}}
\put(67.62,32.38){\vector(1,-1){0.12}}
\linethickness{0.3mm}
\multiput(30,30)(0.12,0.12){42}{\line(1,0){0.12}}
\put(35,35){\vector(1,1){0.12}}
\linethickness{0.3mm}
\put(50,25){\line(0,1){5}}
\put(50,30){\vector(0,1){0.12}}
\put(80,55){\makebox(0,0)[cc]{}}

\put(60,55){\makebox(0,0)[cc]{}}

\put(60,55){\makebox(0,0)[cc]{}}

\linethickness{0.3mm}
\multiput(59.94,50.51)(0.06,-0.51){1}{\line(0,-1){0.51}}
\multiput(59.84,51.02)(0.11,-0.5){1}{\line(0,-1){0.5}}
\multiput(59.69,51.51)(0.15,-0.49){1}{\line(0,-1){0.49}}
\multiput(59.49,51.98)(0.1,-0.24){2}{\line(0,-1){0.24}}
\multiput(59.24,52.44)(0.12,-0.23){2}{\line(0,-1){0.23}}
\multiput(58.95,52.86)(0.14,-0.21){2}{\line(0,-1){0.21}}
\multiput(58.63,53.26)(0.11,-0.13){3}{\line(0,-1){0.13}}
\multiput(58.26,53.63)(0.12,-0.12){3}{\line(1,0){0.12}}
\multiput(57.86,53.95)(0.13,-0.11){3}{\line(1,0){0.13}}
\multiput(57.44,54.24)(0.21,-0.14){2}{\line(1,0){0.21}}
\multiput(56.98,54.49)(0.23,-0.12){2}{\line(1,0){0.23}}
\multiput(56.51,54.69)(0.24,-0.1){2}{\line(1,0){0.24}}
\multiput(56.02,54.84)(0.49,-0.15){1}{\line(1,0){0.49}}
\multiput(55.51,54.94)(0.5,-0.11){1}{\line(1,0){0.5}}
\multiput(55,55)(0.51,-0.06){1}{\line(1,0){0.51}}

\put(63.75,55){\makebox(0,0)[cc]{$\pi/4$}}

\end{picture}
\caption{The contours of integration for the Pearcey kernel.}\label{fig:pearcey}
\end{center}
\end{figure}

\section{Proofs}

In this section we prove our results.

\subsection{Proof of Theorem \ref{th:Keps}}

Let us first introduce some notation. Write the kernel in \eqref{eq:kernel} as
 \begin{multline}
K(x_1, m_1,x_2,m_2) = -\frac{\chi_{m_1<m_2}}{2\pi{\rm i}} \oint_{\Gamma_0} \frac{G_{t,m_1,x_1}(z)}{ G_{t,m_2,x_2}(z)} \frac{{\rm d}z}{z}+\frac{1}{(2\pi{\rm i})^2} \oint_{\Gamma_0}\oint_{\Gamma_{\eps,\eps^{-1}}} \frac{G_{t,m_1,x_1}(w)}{G_{t,m_2,x_2}(z)}\frac{1}{z(w-z)} \ {\rm d}z {\rm d}w,
\end{multline}
where
\begin{align} \label{eq:G}
G_{t,m,x}(w)={\rm e}^{t(w+1/w)}(1-\eps w)^{[m_1/2]}(1-\eps/w)^{[(m_1+1)/2]}w^{[x]}.
\end{align}
Let us for a moment ignore the contours of integration and take the limit $L\to \infty$ for the integrand.  It is not difficult to see that we have the following pointwise limit
  \begin{align}\label{eq:limitG}
 \lim_{L\to \infty}  \frac{G_{t,m_1,x_1}(w)}{G_{t,m_2,x_2}(z)}=\frac{{\rm e}^{ \varepsilon \mu_2 (z+1/z)+\eps^2 (z+1/z)^2}}{{\rm e}^{ \varepsilon \mu_2 (w+1/w)+\eps^2 (w+1/w)^2}} \frac{w^{[x_1]}}{z^{[x_2]}}.
 \end{align}
 for $z,w\in \C\setminus \{0\}$. Hence the difficulty in the proof lies in the fact that we have to control the contours of integration (which clearly depend on $\eps$ and hence $L$) while taking the limit.

 As a first step we prove the following result.

\begin{lemma} \label{lem:deform}
With $G$ as in \eqref{eq:G} and $K$ as in \eqref{eq:kernel} we have that
\begin{multline}
K(x_1,m_1,x_2,m_2)= -\frac{\chi_{m_1<m_2}}{2\pi{\rm i}} \oint_{\Gamma_{0,\eps}}{\rm d} z \ \frac{G_{t,m_1,x_1}(z)}{z G_{t,m_2,x_2}(z)} +\frac{1}{(2\pi{\rm i})^2} \oint_{\Gamma_0,\eps}  \oint_{\Gamma_\eps\cup\Gamma_{\eps^{-1}}}\frac{G_{t,m_1,x_1}(w)}{ G_{t,m_2,x_2}(z)} \frac{1}{z(w-z)}\end{multline}
Here $\Gamma_\eps$ and $\Gamma_{\eps^{-1}}$ are two contours encircling the poles $\eps$ and $\eps^{-1}$ respectively, but no other poles. The contour $\Gamma_{0,\eps}$ is a contour encircling the origin and the contour $\Gamma_{\eps}$ but not $\Gamma_{\eps^{-1}}$. The contours are taken so that they do not intersect and have counter clockwise orientation. See also Figure \ref{fig:lemdeform}.
\end{lemma}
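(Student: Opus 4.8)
This is a contour-deformation identity, and the plan is to derive it from \eqref{eq:kernel} by moving only the $w$-contour, keeping careful track of the residues that are crossed. First I would fix a concrete realization of the contours in \eqref{eq:kernel} matching the description given there: $\Gamma_0$ a small positively oriented circle around $0$, and $\Gamma_{\eps,\eps^{-1}}=\Gamma_\eps\cup\Gamma_{\eps^{-1}}$ the union of two small positively oriented circles, around $\eps$ and around $\eps^{-1}$, all pairwise disjoint (if the contour inherited from \cite{BorFer} is a single loop around both poles avoiding $0$, one first contracts it to $\Gamma_\eps\cup\Gamma_{\eps^{-1}}$, crossing no singularity of the integrand). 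With this realization the inner $z$-integral in \eqref{eq:kernel} is literally $\oint_{\Gamma_\eps}+\oint_{\Gamma_{\eps^{-1}}}$, and the \emph{only} difference between \eqref{eq:kernel} in this form and the asserted formula is that the $w$-contour $\Gamma_0$ has been replaced by $\Gamma_{0,\eps}$, a contour winding once around $0$ and once around $\Gamma_\eps$, but not around $\Gamma_{\eps^{-1}}$.

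So the content is to show that this replacement leaves the kernel unchanged. Since $\Gamma_{0,\eps}-\Gamma_0$ is homologous, in the complement of the singularities of the $w$-integrand, to a small loop around each singularity lying between the two contours, the change in each of the two terms is a finite sum of residues. In the single integral the integrand is $\tfrac{G_{t,m_1,x_1}(w)}{wG_{t,m_2,x_2}(w)}=\tfrac1w(1-\eps w)^{[m_1/2]-[m_2/2]}(1-\eps/w)^{[(m_1+1)/2]-[(m_2+1)/2]}w^{[x_1]-[x_2]}$ (the exponential factors in \eqref{eq:G} cancel), whose only possible poles are at $w=0,\eps,\eps^{-1}$, so the replacement changes this term by $-\chi_{m_1<m_2}\,\mathrm{Res}_{w=\eps}\tfrac{G_{t,m_1,x_1}(w)}{wG_{t,m_2,x_2}(w)}$. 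In the double integral the decisive point is that, as a function of $w$, the factor $G_{t,m_1,x_1}(w)$ of \eqref{eq:G} is holomorphic at $w=\eps$ --- in fact it vanishes there to order $[(m_1+1)/2]\ge1$, since $(1-\eps/w)$ vanishes at $w=\eps$ --- so sweeping the $w$-contour past $w=\eps$ produces nothing; the only $w$-pole that becomes newly enclosed is $w=z$, and this happens precisely for $z\in\Gamma_\eps$, because $\Gamma_{0,\eps}$ winds around $\Gamma_\eps$ but not around $\Gamma_{\eps^{-1}}$. Using $\mathrm{Res}_{w=z}\tfrac{G_{t,m_1,x_1}(w)}{G_{t,m_2,x_2}(z)\,z(w-z)}=\tfrac{G_{t,m_1,x_1}(z)}{z\,G_{t,m_2,x_2}(z)}$ and then integrating over $z\in\Gamma_\eps$, this term changes by $+\,\mathrm{Res}_{z=\eps}\tfrac{G_{t,m_1,x_1}(z)}{z\,G_{t,m_2,x_2}(z)}$.

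Adding the two contributions, the total change equals $(1-\chi_{m_1<m_2})\,\mathrm{Res}_{z=\eps}\tfrac{G_{t,m_1,x_1}(z)}{z\,G_{t,m_2,x_2}(z)}$, and it remains to check that this vanishes. If $m_1<m_2$ the prefactor is $0$. If $m_1\ge m_2$ then $[m_1/2]\ge[m_2/2]$ and $[(m_1+1)/2]\ge[(m_2+1)/2]$, so in the displayed ratio the exponents of $(1-\eps z)$ and $(1-\eps/z)$ are nonnegative and there is no pole at $z=\eps$ (the factor $z^{-1}$ being harmless there); hence the residue is $0$. In either case the change is $0$, which is Lemma~\ref{lem:deform}.

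The step I expect to be the real obstacle is exactly the residue bookkeeping in the second paragraph: one must notice that the double-integral integrand is holomorphic at $w=\eps$ (this is precisely where the zero of $G_{t,m_1,x_1}$ at $\eps$, coming from the $(1-\eps/w)$ factor in \eqref{eq:G}, is used), that the residue at $w=z$ for $z\in\Gamma_\eps$ is genuinely picked up, get all orientations and signs consistent, and then match the surviving residue against the $\chi_{m_1<m_2}$-term via the monotonicity of $m\mapsto[m/2]$ and $m\mapsto[(m+1)/2]$. The auxiliary fact that $\Gamma_{0,\eps}$ can be chosen to enclose $\eps$ but not $\eps^{-1}$ is immediate from $0<\eps<1<\eps^{-1}$; this very separation is what one needs in the proof of Theorem~\ref{th:Keps} so that, as $L\to\infty$, the pole at $\eps$ does not collide with the essential singularity at $0$.
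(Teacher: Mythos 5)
Your proof is correct and follows essentially the same route as the paper: split $\Gamma_{\eps,\eps^{-1}}$ into $\Gamma_\eps\cup\Gamma_{\eps^{-1}}$, push the $w$-contour across $\Gamma_\eps$ (legitimate because $G_{t,m_1,x_1}(w)$ is regular at $w=\eps$), collect the residue at $w=z$ for $z\in\Gamma_\eps$, and absorb the resulting $\oint_{\Gamma_\eps}$ term into the single integral using the absence of a pole at $\eps$ when $m_1\ge m_2$. Your bookkeeping of the net change as $(1-\chi_{m_1<m_2})\,\mathrm{Res}_{z=\eps}$ is in fact a cleaner rendering of the case analysis than the paper's prose, which asserts the vanishing of the extra $\Gamma_\eps$ integral "in the case $m_1<m_2$" where the argument actually requires (and you correctly supply) the vanishing for $m_1\ge m_2$.
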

\begin{proof}
First split the contour $\Gamma_{\eps,\eps^{-1}}$ into two small contours $\Gamma_{\eps}$ around $\eps$ and $\Gamma_{\eps^{-1}}$ around $\eps^{-1}$. Then
\begin{multline}
K(x_1, m_1,x_2,m_2) = -\frac{\chi_{m_1<m_2}}{2\pi{\rm i}} \oint_{\Gamma_0}  \frac{G_{t,m_1,x_1}(z)}{ G_{t,m_2,x_2}(z)} \frac{{\rm d}z}{z}+\frac{1}{(2\pi{\rm i})^2} \oint_{\Gamma_0} \oint_{\Gamma_{\eps}} \frac{G_{t,m_1,x_1}(w)}{G_{t,m_2,x_2}(z)}\frac{1}{z(w-z)} \ {\rm d}z {\rm d}w\\
+\frac{1}{(2\pi{\rm i})^2} \oint_{\Gamma_0} \oint_{\Gamma_{\eps^{-1}}} \frac{G_{t,m_1,x_1}(w)}{G_{t,m_2,x_2}(z)}\frac{1}{z(w-z)} \ {\rm d}z {\rm d}w
\end{multline}
Now in the first double integral in the right-hand side we deform $\Gamma_{0}$ so that it encircles $\Gamma_{\eps}$. This deformation will be denoted by $\Gamma_{0,\eps}$. Note that we now pick up a residue at $w=z$. Hence by deforming we obtain an extra single integral over the contour $\Gamma_{\eps}$
\begin{multline}
K(x_1,m_1,x_2,m_2)= -\frac{\chi_{m_1<m_2}}{2\pi{\rm i}} \oint_{\Gamma_{0}} \frac{G_{t,m_1,x_1}(z)}{ G_{t,m_2,x_2}(z)} \frac{{\rm d}z}{z}-\frac{1}{2\pi{\rm i}} \oint_{\Gamma_\eps} \frac{G_{t,m_1,x_1}(z)}{ G_{t,m_2,x_2}(z)} \frac{{\rm d}z}{z}\\+
\frac{1}{(2\pi{\rm i})^2} \oint_{\Gamma_0,\eps} \oint_{\Gamma_{\eps}}  \frac{G_{t,m_1,x_1}(w)}{G_{t,m_2,x_2}(z)}\frac{1}{z(w-z)} \ {\rm d}z {\rm d}w+\frac{1}{(2\pi{\rm i})^2} \oint_{\Gamma_0} \oint_{\Gamma_{\eps^{-1}}} \frac{G_{t,m_1,x_1}(w)}{G_{t,m_2,x_2}(z)}\frac{1}{z(w-z)} \ {\rm d}z {\rm d}w
\end{multline}
Now the extra single integral has the same integrand as the first single integral. The integration is now over a contour encircling the pole at $\eps$ (and no other pole). However, this pole is not present in the case $m_1<m_2$ and then the integral vanishes. Therefore we can glue  the integrals over $\Gamma_{0}$ and $\Gamma_{\eps}$ together and obtain
\begin{multline}
K(x_1,m_1,x_2,m_2)= -\frac{\chi_{m_1<m_2}}{2\pi{\rm i}} \oint_{\Gamma_{0,\eps}} \frac{G_{t,m_1,x_1}(z)}{ G_{t,m_2,x_2}(z)} \frac{{\rm d}z}{z}+\frac{1}{(2\pi{\rm i})^2} \oint_{\Gamma_0,\eps} \oint_{\Gamma_{\eps}} \frac{G_{t,m_1,x_1}(w)}{G_{t,m_2,x_2}(z)}\frac{1}{z(w-z)} \ {\rm d}z {\rm d}w
\\+\frac{1}{(2\pi{\rm i})^2} \oint_{\Gamma_0} \oint_{\Gamma_{\eps^{-1}}}  \frac{G_{t,m_1,x_1}(w)}{G_{t,m_2,x_2}(z)}\frac{1}{z(w-z)} \ {\rm d}z {\rm d}w
\end{multline}

Finally, note that the integrand  has no pole at $w=\eps$ so in the second double integral we can safely deform $\Gamma_0$ to be $\Gamma_{0,\eps}$. This proves the claim.
\begin{figure}
\begin{center}\input{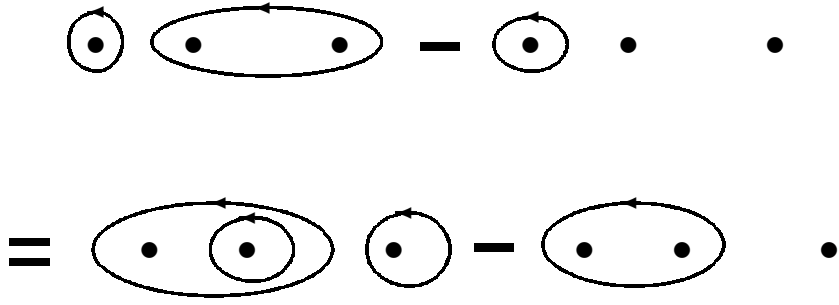}
\caption{Deforming the contours as in Lemma \ref{lem:deform}.} \label{fig:lemdeform}
\end{center}
\end{figure}
\end{proof}
Now the proof of Theorem \ref{th:Keps} follows by simply taking the limit in the integrands and  correctly choosing the contours $\Gamma_{\eps} $ and $\Gamma_{\eps^{-1}}$.

\begin{proof}[Proof of Theorem \ref{th:Keps}]

Since $\eps$ will be small, we  take the contour $\Gamma_{0,\eps}$ to be some fixed contour encircling the origin independent of $\eps$, say the unit circle for simplicity. Due to the behavior of $G_{t,m_2,x_2}(z)$ for $z\to \infty$ we can deform $\Gamma_{\eps^{-1}}$ to any simple contour $\Sigma$ that connects $\infty$ to $\infty$ crossing the real axis once at a point between $1$ and $\eps^{-1}$ and contained in the sector
\begin{align}
\{z\in \C \mid -\pi/2+\delta < \arg z <\pi/2-\delta\}.
\end{align}
for some fixed $\delta>0$. In fact we choose $\Sigma$ to be a contour so that it eventually falls inside the sector
\begin{align}
\{z\in \C \mid -\pi/2+\delta < \arg z<-\pi/4-\delta, \quad \pi/4+\delta <\arg z <\pi/2-\delta\},
\end{align}
for some $\delta>0$.

The behavior of $G_{t,m_2,x_2}(z)$ for $z\to 0$ is to a large extent similar to the behavior near $\infty$, which can be seen by performing the transform $z\mapsto 1/z$. We can  (and do) deform the contour $\Gamma_{\eps}$ to be $\Sigma^{-1}$. Note that in this way, we have deformed the contours, so that they go through the essential singularities of the integrand.

Now that we have chosen the contours, that are clearly independent of $\eps$, we can compute the limits of the integrand, which is given in \eqref{eq:limitG}. Due to the choice of the sector in which $\Sigma$ eventually falls inside, the convergence is uniform in $z\in \Sigma\cup\Sigma_{-1}$ and $w\in \Gamma_0$. This proves the statement.
\end{proof}

\subsection{Proof of Theorem \ref{th:endpoints}}

\begin{proof}
We start by noting that  by expanding a factor $(1-\eps/w)(1+\eps w)$ in the integrals for the kernel $K$ in \eqref{eq:kernel} we  get for odd $n,m$
\begin{multline}\label{eq:proof:rec1}
K(x,n+2,y,m)=(1+\eps^2)K(x,n,y,m)-\eps(K(x+1,n,y,m)+K(x-1,n,y,m))\\+\frac{\delta_{n,m-2}}{2\pi} \oint (1-\eps w)(1-\eps/w) w^{x-y-1}{\rm d}w.
\end{multline}
There is an additional  single integral because of the fact that $\xi_{n+2<m}=\xi_{n<m}-\delta_{n,m-1}-\delta_{n,m-2}$. Now since $n,m$ are both odd $\delta_{n,m-1}$ vanishes trivially and only $\delta_{n,m-2}$ remains.

Similarly,
\begin{multline}\label{eq:proof:rec2}
K(x,n,y,m-2)=(1+\eps^2)K(x,n,y,m)-\eps(K(x,n,y+1,m)+K(x,n,y-1,m))\\+\frac{\delta_{n,m-2}}{2\pi} \oint (1-\eps w)(1-\eps/w) w^{x-y-1}{\rm d}w.
\end{multline}
Therefore
\begin{multline}
-K(x_i,m_i+2,x_j,m_j)+K(x_i,m_i,x_j,m_j)=\eps\Big(K(x_i+1,m_i,x_j,m_j)+K(x_i-1,m_i,x_j,m_j) \\+\delta_{m_i,m_j-2} (\delta_{x_i+1,x_j}+\delta_{x_i-1,x_j})\Big)-\delta_{m_i,m_j-2} \delta_{x_i,x_j}+\OO(\eps^2)\end{multline}
Now note that if we have $m_i\neq m_j$ then $m_i-m_j$ is of order $L$, so that we can ignore the term $\delta_{m_i,m_j-2}$, so that
\begin{multline}\label{eq:proof:rec3}
-K(x_i,m_i+2,x_j,m_j)+K(x_i,m_i,x_j,m_j)=\eps\big(K(x_i+1,m_i,x_j,m_j)+K(x_i-1,m_i,x_j,m_j)\big)
\end{multline}
and
\begin{multline}\label{eq:proof:rec4}
-K(x_i,m_i+2,x_j,m_j+2)+K(x_i,m_i,x_j,m_j+2)=\eps\Big(K(x_i+1,m_i,x_j,m_j+2)+K(x_i-1,m_i,x_j,m_j+2)\\+\delta_{m_i,m_j} (\delta_{x_i+1,x_j}+\delta_{x_i-1,x_j})\Big)-\delta_{m_i,m_j} \delta_{x_i,x_j}+\OO(\eps^2)\end{multline}
By subtracting \eqref{eq:proof:rec4} from \eqref{eq:proof:rec3} and inserting \eqref{eq:proof:rec2} we obtain
\begin{multline}\label{eq:proof:rec5}
-K(x_i,m_i+2,x_j,m_j)+K(x_i,m_i,x_j,m_j)+K(x_i,m_i+2,x_j,m_j+2)-K(x_i,m_i,x_j,m_j+2)=\delta_{m_i,m_j}\delta_{x_i,x_j}+\OO(\eps^2)
\end{multline}
By the complementation principle (see \cite{BOO})
\begin{align}
\tilde \rho_N((x_1,m_1),\ldots,(x_N,m_N))=\det
\begin{pmatrix}
K(x_i,m_i,x_j,m_j)& -K(x_i,m_i+2,x_j,m_j)\\
K(x_i,m_i,x_j,m_j+2)&I-K(x_i,m_i+2,x_j,m_j+2)
\end{pmatrix}
\end{align}
By adding the first column to the second and afterward subtracting the second row from the first we obtain that the determinant equals
\begin{align}
\det\begin{pmatrix}
K(x_i,m_i,x_j,m_j)-K(x_i,m_i,x_j,m_j+2)& -K(x_i,m_i+2,x_j,m_j)+K(x_i,m_i,x_j,m_j)\\
&-\delta_{(x_i,m_i),(x_j,m_j)}+K(x_i,m_i+2,x_j,m_j+2)-K(x_i,m_i,x_j,m_j+2)\\\\
K(x_i,m_i,x_j,m_j+2)&\delta_{(x_i,m_i),(x_j,m_j)}-K(x_i,m_i+2,x_j,m_j+2)+K(x_i,m_i,x_j,m_j+2)
\end{pmatrix}
\end{align}
Now using, \eqref{eq:proof:rec2} in the upper left block, \eqref{eq:proof:rec5} in the upper right block, and \eqref{eq:proof:rec4} and \eqref{eq:proof:rec2} in the lower right block, this determinant has the form\begin{align}\tilde \rho_N((x_1,m_1),\ldots,(x_N,m_N))=\det\begin{pmatrix}
1+\OO(\eps^2) & \OO(\eps^2)\\
\OO(1) & \eps(K(x_i+1,m_i,x_j,m_j)+K(x_i-1,m_i,x_j,m_j))
\end{pmatrix},
\end{align}
from which the proposition easily follows.
\end{proof}
\subsection{Proof of Proposition \ref{prop:sym}}

\begin{proof}
1. The statement easily follows by  the transform $z\mapsto 1/z$ and $w\mapsto 1/w$ in the integral representation of $K^{\epsilon}(-x_1,\mu_1,-x_2,\mu_2)$.

2. The second property follows by the transformation $z\mapsto -z$ and $w\mapsto -w$ in the integrals and a deformation of $\Sigma$. In the definition of the kernel $K^{\epsilon}$ we take $\Sigma$ on the right of $\Gamma_0$. However, we can also  take $\Sigma$ to be on the left at the cost  of an extra integral as shown in Figure \ref{fig:contourswitch}.  The second double integral at the right is easy to compute since the integral over $z$  encircles the pole $z=w$ only and hence can be computed by the Residue Theorem.   The result of this is that we can rewrite the kernel in the following way
\begin{figure}
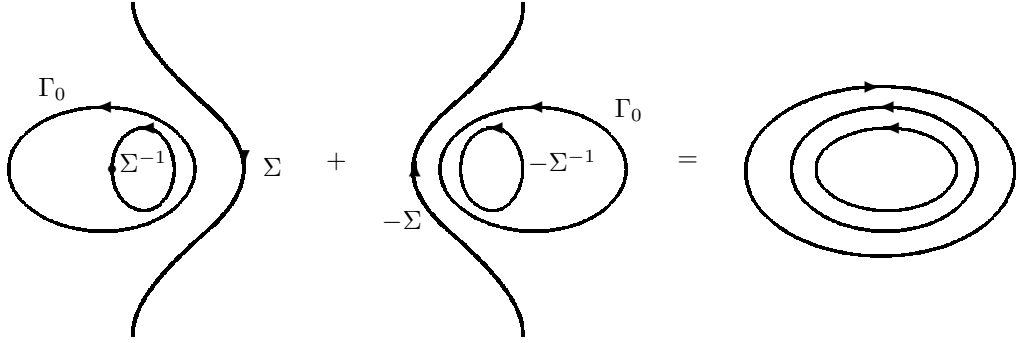

\begin{center}
\input{CrK}  \input{CrKmin} \input{CrKRes}
\end{center}
\caption{In the definition of $\mathcal K^\epsilon$ we can switch the contour $\Sigma$ to $-\Sigma$ to obtain \eqref{eq:kernelepsleft}. }\label{fig:contourswitch}
\end{figure}

\begin{align}\label{eq:kernelepsleft}
K^\epsilon(x_1,\mu_1,x_2,\mu_2)= \delta_{(x_1,\mu_1,x_2,\mu_2)} +\xi_{\mu_2<\mu_1} \int_{\Gamma_0} -\oint_{\Gamma_0} \int_{-\left(\Sigma\cup \Sigma^{-1}\right)}
\end{align}
Now by inserting $\mu_j=-\mu_j$ and the transformation $w\mapsto -w$ and $z\mapsto -z$ we arrive at the statement.\end{proof}

\subsection{Proof of Theorem \ref{th:hermite}}

\begin{proof}
By the first symmetry property in Proposition \ref{prop:sym} it suffices to consider the case $x_2<0$ only.  Using the transform $z\mapsto z/\epsilon$ and $w\mapsto w/\epsilon$ we obtain
\begin{multline}
\epsilon^{x_1-x_2} \mathcal K^{\epsilon}(x_1,\mu_1,x_2,\mu_2) =
-\chi_{\mu_1<\mu_2} \int_{\Gamma_0} {\rm e}^{(\mu_2-\mu_1)(w+\epsilon^2 /w)} w^{x_1-x_2-1}{\rm d} w
\\
+\frac{1}{(2\pi{\rm i})^2}\oint_{\Gamma_0} \int_{\Sigma\cup\Sigma^{-1}}\frac{{\rm e}^{ \mu_2 (z+\frac{\epsilon^2}{z})+\frac{1}{2}(z+\frac{\epsilon^2}{z})^2}w^{x_1}}
{{\rm e}^{ \mu_1 (w+\frac{\epsilon^2}{w})+\frac{1}{2}(w+\frac{\epsilon^2}{w})^2}z^{x_2}}
\frac{{\rm d}z {\rm d}w}{z(w-z)}.
\end{multline}
Setting $\epsilon=0$ at the right-hand side gives
\begin{multline}
\lim_{\epsilon\downarrow 0}\epsilon^{x_1-x_2} \mathcal K^{\epsilon}(x_1,\mu_1,x_2,\mu_2) =
-\chi_{\mu_1<\mu_2} \int_{\Gamma_0} {\rm e}^{(\mu_2-\mu_1)w} w^{x_1-x_2-1}{\rm d} w\\
+\frac{1}{(2\pi{\rm i})^2}\oint_{\Gamma_0} \int_{ \Sigma\cup \Sigma^{-1}}\frac{{\rm e}^{ \mu_2 z+\frac{1}{2}z^2}w^{x_1}}
{{\rm e}^{ \mu_1 w+\frac{1}{2}w^2}z^{x_2}}
\frac{{\rm d}z {\rm d}w}{z(w-z)}.
\end{multline}
The single integral can be easily computed. As for the double integral, we note that since $x_2< 0$ we have that the integral over $\Sigma^{-1}$ vanishes and hence
\begin{multline}
\lim_{\epsilon\downarrow 0}\epsilon^{x_2-x_1} \mathcal K^{\epsilon}(x_1,\mu_1,x_2,\mu_2) =
-\chi_{\mu_1<\mu_2} \int_{\Gamma_0} {\rm e}^{(\mu_2-\mu_1)w} w^{x_1-x_2-1}{\rm d} w\\
+\frac{1}{(2\pi{\rm i})^2}\oint_{\Gamma_0} \int_{ \Sigma}\frac{{\rm e}^{ \mu_2 z+\frac{1}{2}z^2}w^{x_1}}
{{\rm e}^{ \mu_1 w+\frac{1}{2}w^2}z^{x_2}}
\frac{{\rm d}z {\rm d}w}{z(w-z)}.
\end{multline}
The single integral can easily be computed. This proves \eqref{eq:kerneleps=0a}.

Finally, note that if $x_1\geq 0$ then both integrals vanish since  there is no pole for $w=0$.
\end{proof}

\subsection{Proof of Theorem \ref{th:pearcey}}
\begin{proof}

The double integral in \eqref{eq:kernelCr}  in the new parameters given by \eqref{eq:newparamP} reads
\begin{align}
\frac{1}{(2\pi {\rm i})^2} \oint_{\Gamma_{0}} {\rm d}w \int_{\Sigma\cup \Sigma^{-1}}{\rm d}z\  \frac
{{\rm e}^{\frac{M^2}{2} (z-1)^4/z^2+M \tilde \nu_2 (z+1/z)-M^{1/2} \xi_1 \ln z}}
{{\rm e}^{\frac{M^2}{2} (w-1)^4/z^2+M\tilde \nu_2 (w+1/w)-M^{1/2} \xi_2  \ln z}}
\frac{1}{z(w-z)}
\end{align}
For large $M$ the main contribution comes from the terms
\begin{align}
\frac{(z-1)^4}{z^2}\quad \textrm{ and } \quad \frac{(w-1)^4}{w^2}.
\end{align}
\begin{figure}[t]
\begin{center}
\includegraphics[scale=0.3]{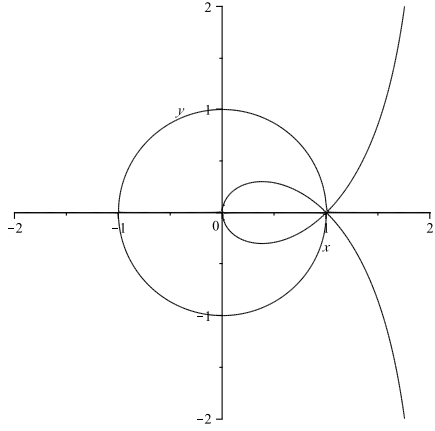}
\end{center}
\caption{The line $\{w\mid \Im((w-1)^4/w^2)=0\}$ which are the contours of steepest descent/ascent leaving from $w=1$. }\label{fig:pathsstds}
\end{figure}
This term has a critical point at $z=1$  (resp. $w=1$) of order three. Therefore, there are four paths of steepest descent leaving from $z=1$ and four paths of steepest ascend, as shown in Figure \ref{fig:pathsstds}. In fact, the paths of steepest ascent leaving from $w=1$ are the unit circle and the real line.  We now deform $\Gamma_0$ to be the unit circle. The contour $\Sigma$ is deformed so that it passes through $z=1$ and follows the path of steep descent outside the unit circle. Then locally around $z=1$ and $w=1$ the contours $\Gamma_0$,  $\Sigma$ and $\Sigma^{-1}$ can be deformed to the contours for the Pearcey kernel  as shown in Figure \ref{fig:pearcey}.

By standard steepest descent arguments one can now show that the dominant contribution comes from a neighborhood around $z=1$ and $w=1$. More precisely, after introducing the local variables \begin{align}
\left\{
\begin{array}{l}
w=1+\tilde w/M^{1/2}\\
z=1+\tilde z/M^{1/2}
\end{array}
\right.
\end{align}
it is not difficult to prove that
\begin{multline}\label{eq:proofPdouble}
\frac{1}{(2\pi {\rm i})^2} \oint_{\Gamma_{0}} {\rm d}w \int_{\Sigma\cup \Sigma^{-1}}{\rm d}z\  \frac
{{\rm e}^{\frac{M^2}{2} (z-1)^4/z^2+M \nu_2 (z+1/z)-M^{1/2} \xi_2 \ln z}}
{{\rm e}^{\frac{M^2}{2} (w-1)^4/z^2+M \nu_1 (w+1/w)-M^{1/2} \xi_1 \ln z}}
\frac{1}{z(w-z)}\\
=\frac{M^{1/2}{\rm e}^{2M(\nu_2-\mu_1)}}{(2\pi{\rm i})^2} \int_{-{\rm i}\infty}^{{\rm i}\infty}{\rm d}z \int_{\mathcal C} {\rm d}w \
\frac
{{\rm e}^{\frac{1}{2} \tilde z^4+ \nu_2 \tilde z^2- \xi_2 z }}
{{\rm e}^{\frac{1}{2}  \tilde w^4+\nu_1 \tilde w^2- \xi_1 w}}
\frac{1}{w-z}\left(1+\OO(M^{-1/2})\right),
\end{multline}
as $M\to \infty$.

Now consider the single integral in \eqref{eq:kernelCr}, which in the new  parameters \eqref{eq:newparamP} reads
\begin{align}
\frac{1}{2\pi{\rm i}}\oint_{\Gamma_0} {\rm d}w \ {\rm e}^{M(\nu_2-\nu_1)(w+1/w)}w^{M^{1/2} (\xi_1-\xi_2)-1}
\end{align}
The dominant term in this integral is
\begin{align}
{\rm e}^{M(\nu_2-\nu_1) (w+1/w)}.
\end{align}
A simple computation shows that $w+1/w$ has two saddle points $w=\pm 1$, both of order one. Since $\nu_2>\nu_1$ (otherwise the single integral is not present) we have that $w=+1$ is the dominant saddle point. This means that, the main contribution comes from the part of the contour close to $w=1$. Hence, if we introduce the new local variable
\begin{align}
w=1+\tilde w/M^{1/2},
\end{align}
then by standard arguments one can prove that
\begin{multline} \label{eq:proofPsingle}
\frac{1}{2\pi{\rm i}}\oint_{\Gamma_0} {\rm d}w \ {\rm e}^{M(\nu_2-\nu_1)(w+1/w)}w^{M^{1/2} (\xi_1-\xi_2)-1}
\\
=\frac{M^{1/2} {\rm e}^{2M(\nu_2-\nu_1)}}{2\pi{\rm i}}\int_{-{\rm i}\infty}^{{\rm i}\infty}{\rm d}\tilde w \ {\rm e}^{(\nu_2-\nu_1)\tilde w^2-(\xi_1-\xi_2)\tilde w} \left(1+\OO(M^{-1/2})\right)
\end{multline}
By inserting \eqref{eq:proofPsingle} and \eqref{eq:proofPdouble} in \eqref{eq:kernelCr} and taking the limit $M\to \infty$ we obtain \eqref{eq:kernelP}.
This proves Theorem \ref{th:pearcey}.
\end{proof}

\end{document}